\DeclareOldFontCommand{\tt}{\normalfont\ttfamily}{\mathtt}
\newcommand{\norm}[1]{\left\Vert#1\right\Vert}
\newcommand{\set}[1]{\ensuremath{ \{ #1 \} }}
\renewcommand{\mid}{\,|\,}
\DeclareMathOperator*{\esssup}{ess\,sup}
\DeclareMathOperator*{\essinf}{ess\,inf}
\DeclareMathOperator*{\essliminf}{ess\,liminf}
\begin{document}
 
\title{Vector duality via conditional extension of dual pairs}

\author[a,1,t2]{Samuel Drapeau}
\author[b,2,t1,t3]{Asgar Jamneshan}
\author[b,3,t1]{Michael Kupper}

\address[a]{Shanghai Jiao Tong University, Shanghai Advanced Institute of Finance}
\address[b]{University of Konstanz}
\eMail[1]{sdrapeau@saif.sjtu.edu.cn}
\eMail[2]{asgar.jamneshan@uni-konstanz.de}
\eMail[3]{kupper@uni-konstanz.de}

\myThanks[t1]{Financial support from the DFG Project KU-2740/2-1 is gratefully acknowledged.}
\myThanks[t2]{Financial support from the National Science Foundation of China ''Research Fund for International Young Scientists'' Grant number 11550110184 is gratefully acknowledged.}

\myThanks[t3]{The author would like to thank Jonathan Borwein for his kind encouragement.}

\abstract{
A Fenchel-Moreau type duality for proper convex and lower semi-continuous functions $f\colon X\to \overline{L^0}$ is established where $(X,Y,\langle \cdot,\cdot \rangle)$ is a dual pair of Banach spaces and $\overline{L^0}$ is the set of all extended real-valued measurable functions. 
We provide a concept of lower semi-continuity which is shown to be equivalent to the existence of a dual representation in terms of elements in the Bochner space $L^0(Y)$. 
To derive the duality result, several conditional completions and extensions are constructed.  
\vspace{7mm}
}
\keyWords{Vector duality, extensions, conditional sets, Bochner spaces, vector optimization}

\keyAMSClassification{46A20, 03C90, 54D35, 54C20, 46B22}

\date{\today}
\ArXiV{}
\maketitle

\section{Introduction}

Duality theory is an important tool in vector optimization and its wide range of applications. 
This article contributes to vector duality by providing a notion of lower semi-continuity and proving its equivalence to a Fenchel-Moreau type dual representation. 
For the sake of illustration, let $(\Omega,\mathcal{F},\mu)$ be a $\sigma$-finite measure space, $(X,Y,\langle \cdot, \cdot\rangle)$ a dual pair of Banach spaces and $\overline{L^0}$ the collection of all measurable functions $x\colon\Omega\to \mathbb{R}\cup\{\pm\infty\}$ where two of them are identified if they agree almost everywhere. 
Consider on $\overline{L^0}$ the order of almost everywhere dominance. 
Let $f\colon X\to \overline{L^0}$ be a proper convex function. 
We introduce a concept of lower semi-continuity which is shown to be equivalent to the Fenchel-Moreau type dual representation 
\begin{equation}\label{darstellung}
 f(x)=\esssup_{y\in L^0(Y)}\{\langle x,y\rangle - f^\ast(y)\} 
\end{equation}
for all $x\in X$, where $L^0(Y)$ is the Bochner space of all strongly measurable functions $y\colon \Omega\to Y$ modulo almost everywhere equality, $f^\ast(\cdot)=\esssup_{x\in X}\{\langle x,\cdot\rangle - f(x)\}$ is the convex conjugate and $\langle x,y\rangle(\omega)=\langle x,y(\omega)\rangle$ almost everywhere. 

The basic idea is to conditionally extend the whole structure of the problem to conditional set theory where a conditional version of the classical Fenchel-Moreau duality is applied, which translated to the original framework gives us the representation \eqref{darstellung}. 
The extension procedure consists of three elements. 
First, we conditionally complete the Banach spaces $X$ and $Y$ and prove that they are isometrically isomorphic to the Bochner spaces $L^0(X)$ and $L^0(Y)$, respectively. 
Second, we conditionally extend the duality pairing to a conditional duality pairing on the product $L^0(X)\times L^0(Y)$. 
Third, we construct a conditionally lower semi-continuous extension $f_c$ to $L^0(X)$ with values in $\overline{L^0}$. 
By identifying $\overline{L^0}$ with the conditional extended real numbers and applying a conditional version of the Fenchel-Moreau theorem, we obtain a conditional dual representation of $f_c$ in terms of conditional dual elements in $L^0(Y)$. 
By evaluating constants in the conditional dual representation, we get the vector duality \eqref{darstellung}. 
The conditional completions and conditional extensions and the vector duality result are proved for an arbitrary complete Boolean algebra with the corresponding conditional real numbers instead of an associated measure algebra.  

We discuss the related literature. 
Scalarization techniques, see e.g.~\cite[Chapter 4]{ioan2009duality}, are in general not applicable since $\text{int}(L^0_+)=\emptyset$ and $(L^0)^\ast=\{0\}$. 
If $f\colon X\to L^0$ is convex and continuous with respect to the topology of convergence in measure, \cite[Theorem 1]{zowe75} yields a dual representation $f(x)=\esssup_{x^\ast\in \mathcal{L}(X,L^0)} \{\langle x, x^\ast\rangle - f^\ast(x^\ast)\}$ where $\mathcal{L}(X,L^0)$ is the set of all linear and continuous functions from $X$ to $L^0$ and $f^\ast$ is the corresponding convex conjugate. 
For a convex duality of set-valued functions, we refer to \cite{hamel09,hamel14} and their references. 
Vector duality for modules over the ring $L^0$ is introduced in \cite{kupper03} which is not applicable since $X$ is not a module over $L^0$. 
The module-based duality is further studied in \cite{zapatalebesgue,zapataportfolio} where dual representations of conditional convex risk measures on $L^\infty$-type modules are established. 
The possibility of convex duality in modules is investigated in \cite{kutateladze81}. 
Conditional set theory is developed in \cite{DJKK13} and is closely related to toposes of sheaves over complete Boolean algebras, see \cite{artin1973theorie} and see \cite{lane2012sheaves} for an introduction, and to Boolean-valued models of set theory which emerged from  \cite{cohen63,cohen64} in \cite{scott67,solovay70,vopenka1979mathematics}, see \cite{bell2005set} for an introduction. 
For an overview on Boolean-valued analysis, we refer to \cite{kusraev2012boolean}. 

The article is organized as follows. In Section \ref{sec:metric}, conditional set theory is briefly introduced and conditional completions of metric spaces are constructed. 
In Section \ref{sec:duality}, the conditional extension of dual pairs of normed vector spaces is proved, the concept of lower semi-continuity together with a conditionally lower semi-continuous extension are given and the main vector duality result is shown. 
Finally, the results are applied to vector duality in Bochner spaces in Section \ref{sec:Bochner}. 

\section{Conditional completion of metric spaces}\label{sec:metric}

We start by collecting basic results from conditional set theory as developed in \cite{DJKK13}. 
Throughout, we fix a complete non-degenerate Boolean algebra $\mathcal{A}=(\mathcal{A},\wedge,\vee,{}^c,0,1)$. 
The order on $\mathcal{A}$ is the relation $a\leq b$ whenever $a\wedge b=a$. 
Denote by $\vee a_i=\vee_{i\in I} a_i$ and $\wedge a_i=\wedge_{i\in I} a_i$ the supremum and the infimum of a family $(a_i)=(a_i)_{i\in I}$ in $\mathcal{A}$, respectively. 
A partition in $\mathcal{A}$ is a family $(a_i)$ of elements in $\mathcal{A}$ such that $a_i\wedge a_j=0$ whenever $i\neq j$ and $\vee a_i=1$.
Denote by $p$ the set of all partitions in $\mathcal{A}$. 
\begin{definition}
    A \emph{conditional set} of a non-empty set $X$ and $\mathcal{A}$ is a collection $\mathbf{X}$ of objects $x|a$ where $x\in X$ and $a\in\mathcal{A}$ such that 
    \begin{itemize}
        \item[(C1)] $x|b=y|a$ implies $a=b$; 
        \item[(C2)] $x|b=y|b$ and $a\leq b$ imply $x|a=y|a$; 
        \item[(C3)] for each $(a_i)\in p$ and every $(x_i)$ in $X$ there exists a unique $x\in X$ such that $x|a_i=x_i|a_i$ for all $i$. 
        The unique element $x$ is called the \emph{concatenation} of $(x_i)$ along $(a_i)$ and is denoted by $x=\sum x_i|a_i$. 
    \end{itemize}
\end{definition}
The property (C2) is called \emph{consistency} and the property (C3) is named \emph{stability}. 
A subset $Y\subseteq X$ is \emph{stable} if $Y\neq\emptyset$ and $\sum y_i|a_i\in Y$ for every family $(y_i)$ in $Y$ and each $(a_i)\in p$.
A stable subset $Y$ induces a \emph{conditional subset} $\mathbf{Y}:=\{x|a: x\in Y, a\in\mathcal{A}\}$ of $\mathbf{X}$. 
A singleton $\{x\}$ is stable. The induced conditional subset $\mathbf{x}:=\{x|a\colon a\in \mathcal{A}\}$ is called a \emph{conditional element in} $\mathbf{X}$. 
It is shown in \cite[Theorem 2.9]{DJKK13} that the collection of all conditional subsets of $\mathbf{X}$ with the conditional set operations of \emph{conditional intersection} $\sqcap$, \emph{conditional union} $\sqcup$ and \emph{conditional complement} ${}^\sqsubset$ form a complete Boolean algebra.
A collection $\mathcal{V}$ of conditional subsets of $\mathbf{X}$ is stable if the conditional subset of $\mathbf{X}$ which is induced by 
$\{\sum y_i|a_i\colon y_i\in Y_i \text{ for all }i\}$ is an element of $\mathcal{V}$ for all families $(\mathbf{Y}_i)$ in $\mathcal{V}$ and $(a_i)\in p$. 
The \emph{conditional Cartesian product} of two conditional sets $\mathbf{X}$ and $\mathbf{Y}$ is the conditional set $\mathbf{X}\times \mathbf{Y}:=\{(x,y)\mid a\colon (x,y)\in X\times Y, a\in \mathcal{A}\}$. 
A function $f:X\to Y$ is said to be stable whenever $f(\sum x_i|a_i)=\sum f(x_i)|a_i$ for each $(a_i)\in p$ and every family $(x_i)$ in $X$.
Its graph $G_f:=\{(x,y)\colon f(x)=y\}$ is a stable subset of $X\times Y$.
The induced conditional set is the conditional graph of a \emph{conditional function} $\mathbf{f}:\mathbf{X}\to \mathbf{Y}$.
A conditional function $\mathbf{f}:\mathbf{X}\to \mathbf{Y}$ is \emph{conditionally injective} whenever $f(x)|a=f(y)|a$ implies $x|a=y|a$.
\begin{definition}
    Given a non-empty set $X$, we denote by $X_s$ the set of all families $(x_i,a_i)$ in $X\times \mathcal{A}$ such that $(a_i)\in p$ where two families $(x_i,a_i)$ and $(y_j,b_j)$ are identified whenever $\vee \set{a_i:x_i=z}=\vee \set{b_j:y_j=z}$ for all $z \in X$.
    Denote by $[x_i,a_i]$ the equivalence class of $(x_i,a_i)$ in $X_s$.
    Then $X_s$ induces a conditional set $\mathbf{X_s}$ of objects
    \begin{equation*}
        [x_i,a_i]|a:=\left\{ [y_j,b_j] \in X_s\colon \vee \set{a_i\colon x_i=z}\wedge a=\vee \set{b_j\colon y_j=z}\wedge a \text{ for all }z\in X\right\}. 
    \end{equation*}
    We call $\mathbf{X_s}$ the conditional set of \emph{step functions} with values in $X$.  
    For two non-empty sets $X$ and $Y$, the function $f_s\colon X_s\to Y_s$ defined by $f_s(\sum x_i|a_i):=\sum f(x_i)|a_i$ is a stable function. 
    We call the induced conditional function $\mathbf{f_s\colon \mathbf{X_s}\to Y_s}$ a \emph{conditional step function}. 
\end{definition}
Note that there exists a bijection from $X$ to $\{[x,1]\colon x\in X\}$ as a non-stable subset of $\mathbf{X_s}$ representing the constant step functions in $\mathbf{X_s}$.
By stability, each element $[x_i,a_i]\in X_s$ can be written as $\sum [x_i,1]|a_i$ and therefore the notation $\sum x_i|a_i$ is assigned to the elements of $X_s$.     
The conditional set $\mathbf{N_s}$ of step functions with values in the natural numbers $\mathbb{N}$ is called the \emph{conditional natural numbers}.

    We summarize the construction of the conditional real numbers.
    Let $\mathbf{R_s}$ be the conditional set of step functions with values in the real numbers $\mathbb{R}$. 
    On $\mathbb{R}_s$ define the stable relation $\sum x_i\mid a_i \leqslant \sum y_j\mid b_j$ whenever $x_i\leq y_j$ for all $i,j$ with $a_i\wedge b_j>0$. 
    We understand $\sum x_i\mid a_i < \sum y_j\mid b_j$ if $x_i< y_j$ for all $i,j$ with $a_i\wedge b_j>0$. 
    Moreover, define on $\mathbb{R}_s$ the stable operations $\sum x_i\mid a_i + \sum y_j\mid b_j:=\sum x_i + y_j\mid a_i\wedge b_j$ and $\sum x_i\mid a_i \cdot \sum y_j\mid b_j:=\sum x_i \cdot y_j\mid a_i\wedge b_j$. 
    Then $(\mathbf{R_s},+,\cdot,\leqslant)$ is a conditionally ordered field.\footnote{See \cite[Definition 4.1]{DJKK13}.} 
    Define the stable absolute value $|\sum x_i\mid a_i|:=\sum|x_i|\mid a_i$. 
    A conditional sequence\footnote{A conditional sequence $(\mathbf{x}_{\mathbf{n}})$ in a conditional set $\mathbf{X}$ is a conditional function $\mathbf{N_s}\to\mathbf{X}$.} $(\mathbf{x_n})$ in $\mathbf{R_s}$ is \emph{conditionally Cauchy} if for all $\mathbf{r}$ in $\mathbf{R_s}$ with $\mathbf{r>0}$ there is $\mathbf{n}_0$ in $\mathbf{N_s}$ such that $|\mathbf{x_m-x_n}|< \mathbf{r}$ for all $\mathbf{m,n}\geqslant \mathbf{n}_0$.\footnote{The conditional order on $\mathbf{N_s}$ is inherited from the conditional order on $\mathbf{R_s}$.}  
    By conditionally identifying\footnote{See \cite[Definition 2.15]{DJKK13}.} two conditional Cauchy sequences $(\mathbf{x_n})$ and $(\mathbf{y_n})$ whenever $|\mathbf{x_n-y_n}|$ conditionally converges\footnote{For the concept of conditional convergence, see \cite[Definition 3.19]{DJKK13}.} to $\mathbf{0}$, we obtain the \emph{conditional real numbers} which we denote by $\mathbf{R}$. 
    The conditional elements in $\mathbf{R}$ are denoted $\mathbf{x}=[(\mathbf{x_n})]$. 
    The stable set which induces $\mathbf{R}$ is denoted by $R$ and consists of equivalence classes of stable Cauchy sequences $x=[(x_n)]$. 
    The conditional order on $\mathbf{R}$ is induced by the stable relation on $R$ defined by $[(x_n)]\leqslant [(y_n)]$ whenever for all $r\in \mathbb{R}_s$ with $r>0$ there exists $n\in \mathbb{N}_s$ such that $y_m -x_m>-r$ for all $m\geqslant n$.
    We write $[(\mathbf{x_n})]<[(\mathbf{y_n})]$ if there are $\mathbf{r}$ in $\mathbf{R_s}$ with $\mathbf{r>0}$ and $\mathbf{n}$ in $\mathbf{N_s}$ such that $\mathbf{y_m -x_m>r}$ for all $\mathbf{m}\geqslant \mathbf{n}$. 
    We denote $\mathbf{R}_{+}=\{\mathbf{x}\text{ in }\mathbf{R}\colon \mathbf{x\geqslant 0}\}$ and $\mathbf{R_{++}}=\{\mathbf{x}\text{ in }\mathbf{R\colon x>0}\}$.
    The conditionally ordered set $(\mathbf{R},\leqslant)$ is conditionally Dedekind complete which implies that the ordered set $(R,\leqslant)$ is Dedekind complete in a classical sense.
    The conditional arithmetic operations are defined by $[(\mathbf{x_n})]+[(\mathbf{y_n})]:=[(\mathbf{x_n}+\mathbf{y_n})]$ and $[(\mathbf{x_n})]\cdot[(\mathbf{y_n})]:=[(\mathbf{x_n}\cdot\mathbf{y_n})]$. 
    We have that $(\mathbf{R},+,\cdot,\leqslant)$ is a conditionally ordered field. 
    The conditional absolute value on $\mathbf{R}$ is induced by the stable mapping $|x|=x|a+(-x)|a^c$ where $a=\vee\{b\colon x|b\geqslant 0|b\}$.\footnote{By the well-ordering theorem, consistency and stability it follows that $x|a\geqslant 0|a$.}    
    A conditional open ball with conditional radius $\mathbf{r}$ in $\mathbf{R_{++}}$ around $\mathbf{x}$ is the conditional set $\mathbf{B_r(x)}:=\{\mathbf{y} \text{ in } \mathbf{R}\colon |\mathbf{x-y}|< \mathbf{r}\}$. 
    The collection of all conditional open balls forms a conditional topological base $\mathcal{B}$.\footnote{See \cite[Definition 3.1]{DJKK13}.}
    The conditional topology conditionally generated\footnote{See the paragraph succeeding \cite[Definition 3.1]{DJKK13}.} by $\mathcal{B}$ is conditionally complete. 

    The classical technique of completion extends to general conditional metric spaces.
\begin{definition}
    A \emph{conditional metric} is a conditional function $\mathbf{d}:\mathbf{X}\times \mathbf{X}\to \mathbf{R}_+$ such that
    \begin{enumerate}[label=\textit{(\roman*)}]
        \item $\mathbf{d(x,y)=0}$ if and only if $\mathbf{x=y}$;
        \item $\mathbf{d(x,y)=d(y,x)}$ for every $\mathbf{x}$ and $\mathbf{y}$ in $\mathbf{X}$;
        \item $\mathbf{d(x,y)\leqslant d(x,z)+d(z,y)}$ for every $\mathbf{x}$, $\mathbf{y}$ and $\mathbf{z}$ in $\mathbf{X}$.
    \end{enumerate}
    The pair $(\mathbf{X,d})$ is a conditional metric space.
    The conditional topology on $\mathbf{X}$ is generated by the conditional topological base of conditional sets $\mathbf{B_r(x)}:=\{ \mathbf{y}\text{ in }\mathbf{X}\colon \mathbf{d(x,y)<r}\}$ for $\mathbf{r}\text{ in }\mathbf{R}_{++}$ and $\mathbf{x}$ in $\mathbf{X}$.
    A conditional metric space is called \emph{conditionally complete} if every conditional Cauchy sequence has a conditional limit.
\end{definition}
\begin{theorem}\label{thm:010101}
    Let $(\mathbf{X,d})$ be a conditional metric space. 
    Then there exists a unique, up to conditional isometric isomorphisms, conditionally complete metric space $(\mathbf{X_c},\mathbf{d_c})$ such that $\mathbf{X}$ is conditionally isometric to a conditionally dense subset of $\mathbf{X_c}$.\footnote{See the paragraph succeeding \cite[Definition 3.14]{DJKK13} for the definition of a conditionally dense subset.} 
\end{theorem}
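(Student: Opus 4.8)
The plan is to imitate, at the level of the underlying stable sets, the classical construction of a metric completion together with the construction of $\mathbf{R}$ from $\mathbf{R_s}$ recalled above. Let $\mathcal{C}$ denote the set of all stable sequences $(x_n)$ in $X$ whose induced conditional sequence $(\mathbf{x_n})$ in $\mathbf{X}$ is conditionally Cauchy with respect to $\mathbf{d}$, and call two such sequences equivalent, $(x_n)\sim(y_n)$, if $\mathbf{d}(\mathbf{x_n},\mathbf{y_n})$ conditionally converges to $\mathbf{0}$ in $\mathbf{R}$. First I would check that $\sim$ is an equivalence relation, transitivity using the conditional triangle inequality \textit{(iii)}, and that it is stable, so that the quotient $X_c:=\mathcal{C}/\!\sim$ is again a stable set: given a family $(x^i_n)_i$ in $\mathcal{C}$ and a partition $(a_i)\in p$, the sequence $n\mapsto\sum_i x^i_n|a_i$ again lies in $\mathcal{C}$, because conditional Cauchyness localizes to and reglues along partitions exactly as in \cite{DJKK13}. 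Hence $X_c$ induces a conditional set $\mathbf{X_c}$; properties (C1)--(C3) are inherited from those of $\mathbf{X}$.

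Second, I would define $\mathbf{d_c}$ on $\mathbf{X_c}$ by $\mathbf{d_c}([(x_n)],[(y_n)]):=\lim\mathbf{d}(\mathbf{x_n},\mathbf{y_n})$, the conditional limit existing because $(\mathbf{d}(\mathbf{x_n},\mathbf{y_n}))$ is a conditional Cauchy sequence in $\mathbf{R}$ and $\mathbf{R}$ is conditionally complete; this is the point where conditional Dedekind completeness of $\mathbf{R}$ enters, playing the role of completeness of $\mathbb{R}$ in the scalar proof. One then verifies that the value is independent of the chosen representatives, that the induced map is stable, and that \textit{(i)}--\textit{(iii)} hold, all of which are direct transcriptions of the classical arguments with conditional inequalities in place of classical ones. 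The map sending $x\in X$ to the class of the constant sequence $(x,x,\dots)$ is stable and conditionally isometric, yielding a conditional isometric embedding $\mathbf{X}\hookrightarrow\mathbf{X_c}$, whose image is conditionally dense since for $\mathbf{x}=[(\mathbf{x_n})]$ in $\mathbf{X_c}$ the conditional sequence of images of the $\mathbf{x_n}$ conditionally converges to $\mathbf{x}$.

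Third, for conditional completeness I would take a conditional Cauchy sequence $(\mathbf{z_k})$ in $\mathbf{X_c}$ and produce a candidate limit by a diagonal argument: using conditional density, pick for each conditional index $\mathbf{k}$ an element $\mathbf{x}^{(\mathbf{k})}$ in the image of $\mathbf{X}$ with $\mathbf{d_c}(\mathbf{z_k},\mathbf{x}^{(\mathbf{k})})<\mathbf{1}/\mathbf{k}$, check that the resulting stable sequence $(x^{(k)})$ is conditionally Cauchy in $\mathbf{X}$, hence defines $[(x^{(k)})]\in\mathbf{X_c}$, and verify that $(\mathbf{z_k})$ conditionally converges to it. Finally, uniqueness up to conditional isometric isomorphism follows the usual pattern: if $(\mathbf{X_c'},\mathbf{d_c'})$ is another conditionally complete metric space in which $\mathbf{X}$ sits as a conditionally dense subset, the identity on $\mathbf{X}$ extends, by conditional uniform continuity and conditional completeness, to a conditional isometry $\mathbf{X_c}\to\mathbf{X_c'}$, which is conditionally surjective by density and uniquely determined by its restriction to the dense subset.

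The main obstacle I anticipate is not any single estimate but the bookkeeping required to keep every construction stable, and in particular the diagonal and subsequence steps: a choice of index ``for each conditional $\mathbf{r}>\mathbf{0}$'' produces a priori only local choices over pieces of $\mathcal{A}$, and one must invoke stability of $\mathbf{N_s}$ to concatenate them into a genuine conditional natural number before the Cauchy and convergence arguments go through. Once this is handled, every remaining step is a faithful conditional rewriting of the classical completion proof.
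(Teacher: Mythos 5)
Your proposal is correct and follows essentially the same route as the paper: conditionally complete via equivalence classes of conditional Cauchy sequences, define $\mathbf{d_c}$ by conditional limits using the conditional completeness of $\mathbf{R}$, embed $\mathbf{X}$ by constant sequences, and obtain uniqueness by extending the isometry between the dense copies. The only cosmetic difference is in the completeness step, where the paper reduces to conditional Cauchy sequences lying in the dense image $\mathbf{j(X)}$ while you run the approximation/diagonal argument for a general conditional Cauchy sequence directly; these amount to the same argument, and your closing remark about concatenating local index choices via stability of $\mathbf{N_s}$ is exactly the bookkeeping the paper relies on.
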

We call $\mathbf{X_c}$ the \emph{conditional completion} of $\mathbf{X}$.
\begin{proof}
    The relation $\mathbf{(x_n)\sim (y_n)}$ whenever $\mathbf{\lim_{n\to\infty} d(x_n,y_n)=0}$ is a conditional equivalence relation on the conditional set $\mathcal{C}$ of all conditional Cauchy sequences in $\mathbf{X}$. 
    Let $\mathbf{X_c}:=\mathcal{C}/_\sim$. 
    For $\mathbf{(x_n)}$ in $\mathcal{C}$, we denote by $\mathbf{[(x_n)]}$ its conditional equivalence class in $\mathbf{X_c}$.
    For $\mathbf{x}$ in $\mathbf{X}$, we denote by $\mathbf{[x]}$ the conditional equivalence class in $\mathbf{X_c}$ of the constant conditional sequence $\mathbf{x_n=x}$ for all $\mathbf{n}$. 
    Let $\mathbf{d_c}\colon \mathbf{X_c}\times \mathbf{X_c}\to \mathbf{R}_+$ be defined by $\mathbf{d_c([(x_n)],[(y_n)])=\lim_{n\to \infty} d(x_n,y_n)}$.
    By the conditional triangle inequality, the conditional completeness of $\mathbf{R}$ and since $\mathbf{d}$ is a conditional metric, $\mathbf{d_c}$ is a well-defined conditional metric on $\mathbf{X_c}$. 
    Define $\mathbf{j\colon X\to X_c}$ by $\mathbf{j(x)=[x]}$. 
    Inspection shows that $\mathbf{j}$ is a conditional isometry.
    In order to see that $\mathbf{j(X)}$ is conditionally dense in $\mathbf{X_c}$, let $[(\mathbf{x_n})]$ be in $\mathbf{X_c}$.
    Since $(\mathbf{x_n})$ is a conditional Cauchy sequence in $\mathbf{X}$, it follows that the conditional sequence $\mathbf{j(x_n)=([x_n])}$ in $\mathbf{j(X)}$ conditionally converges to $\mathbf{[(x_n)]}$ in $\mathbf{X_c}$.
    To prove the conditional completeness of $(\mathbf{X_c,d_c})$, it suffices to show that every conditional Cauchy sequence in $\mathbf{j(X)}$ has a conditional limit in $\mathbf{X_c}$.
    Let $\mathbf{([x_n])}$ be a conditional Cauchy sequence in $\mathbf{j(X)}$.
    Since $\mathbf{j}$ is a conditional isometry, it follows that $\mathbf{([x_n])}$ conditionally converges to $[\mathbf{(x_n)}]$ in $\mathbf{X_c}$.

    As for the uniqueness, let $\mathbf{Y}_1$ and $\mathbf{Y}_2$ be two conditional completions of $\mathbf{X}$ where $\mathbf{j}_1$ and $\mathbf{j}_2$ denote the conditional isometric embeddings of $\mathbf{X}$ into $\mathbf{Y}_1$ and $\mathbf{Y}_2$, respectively. 
    Since $\mathbf{j}_1$ and $\mathbf{j}_2$ are conditionally injective, $\mathbf{h}:=\mathbf{j}_2\circ \mathbf{j}_1^{-1}\colon \mathbf{j}_1(\mathbf{X})\to\mathbf{j}_2(\mathbf{X})$ and $\mathbf{k}:=\mathbf{j}_1\circ \mathbf{j}_2^{-1}\colon \mathbf{j}_2(\mathbf{X})\to\mathbf{j}_1(\mathbf{X})$ are conditional surjective isometries. 
    Now there exist unique conditional isometries $\mathbf{f}\colon \mathbf{Y}_1\to \mathbf{Y}_2$ and $\mathbf{g}\colon\mathbf{Y}_2\to \mathbf{Y}_1$ which conditionally extend $\mathbf{h}$ and $\mathbf{k}$, respectively.\footnote{By similar arguments as in the proof of Proposition \ref{p:uctsext}, it can be shown that there exists a conditionally unique extension.}
    One has $\mathbf{f}\circ \mathbf{j}_1=\mathbf{j}_2$ and $\mathbf{g}\circ \mathbf{j}_2 =\mathbf{j}_1$. 
    Since $\mathbf{j}_1(\mathbf{X})$ is conditionally dense in $\mathbf{Y}_1$ and $\mathbf{j}_2(\mathbf{X})$ is conditionally dense in $\mathbf{Y}_2$, $\mathbf{g\circ f}$ is the conditional identity of $\mathbf{Y}_1$ and $\mathbf{f\circ g}$ is the conditional identity of $\mathbf{Y}_2$. 
    Hence $\mathbf{f}=\mathbf{g}^{-1}$, and thus $\mathbf{f}\colon \mathbf{Y}_1\to\mathbf{Y}_2$ is the unique conditionally surjective isometry such that $\mathbf{f}\circ \mathbf{j}_1=\mathbf{j}_2$. 
\end{proof}
\begin{remark}
    Let $(X,d)$ be a metric space and $\mathbf{d_s\colon X_s\times X_s}\rightarrow \textbf{R}_+$ induced by the stable function 
    \begin{equation*}
            \left(\sum x_i|a_i, \sum y_j|b_j\right)\longmapsto d_s\left(\sum x_i|a_i, \sum y_j|b_j\right):=\sum d(x_i,y_j)|a_i\wedge b_j.
    \end{equation*}
    Inspection shows that $(\mathbf{X_s},\mathbf{d_s})$ is a conditional metric space. 
    By Theorem \ref{thm:010101}, $(\mathbf{X_s},\mathbf{d_s})$ can be conditionally completed into $(\mathbf{X_c,d_c})$.
    We denote by $(\mathbf{X_s,d_s})$ and $(\mathbf{X_c,d_c})$ the conditional metric space of step functions and its conditional completion, respectively. 
\end{remark}
\begin{remark}\label{rem:stepfunc}
    Let $X$ be a metric space.
    Throughout this article, we identify $\mathbf{X_s}$ with $\mathbf{j(X_s)}$ in $\mathbf{X_c}$. 
    For every $x \in X$, we denote by $\mathbf{x}$ the conditional element in $\mathbf{X_c}$ of the constant step function $x|1$.
\end{remark}
\begin{definition}
    A \emph{conditional norm} on a conditional vector space\footnote{See \citep[Definition 5.1]{DJKK13}.}
  $(\mathbf{X},+,\cdot)$ is a conditional function $\norm{\cdot}:\mathbf{X}\to \mathbf{R}_+$ such that
    \begin{enumerate}[label=\textit{(\roman*})]
        \item $\norm{\mathbf{x}}=\mathbf{0}$ if and only if $\mathbf{x}=\mathbf{0}$;
        \item $\norm{\mathbf{r} \mathbf{x}}=|\mathbf{r}|\norm{\mathbf{x}}$ for all $\mathbf{x}$ in $\mathbf{X}$ and every $\mathbf{r}$ in $\mathbf{R}$;
        \item $\norm{\mathbf{x}+\mathbf{y}}\leqslant \norm{\mathbf{x}}+\norm{\mathbf{y}}$ for all $\mathbf{x}$ and $\mathbf{y}$ in $\mathbf{X}$.
    \end{enumerate}
\end{definition}
 \begin{remark}
 By Theorem \ref{thm:010101}, every conditional normed vector space $(\mathbf{X},\|\cdot\|)$ can be conditionally completed into a conditional metric space $\mathbf{X_c}$. 
 Inspection shows that $(\mathbf{X_c,\|\cdot\|_c})$ is a conditional Banach space. Moreover, every normed vector space $(X,\|\cdot\|)$ can be conditionally completed into a conditional Banach space $(\mathbf{X_c,\|\cdot\|_c})$.
\end{remark}
\begin{proposition}\label{p:uctsext}
 Let $(X,d)$ be a metric space and $f\colon X\to\mathbb{R}$ a uniformly continuous function. 
 Then there exists a unique conditionally uniformly continuous function $\mathbf{f_c\colon X_c\to R}$ such that $f_c(x)=f(x)$ for all $x\in X$. 
\end{proposition}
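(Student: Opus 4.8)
The plan is to carry out, conditionally, the classical two–step extension of a uniformly continuous map to a completion: first from $X$ to the conditionally dense subset of step functions, and then from the step functions to all of $\mathbf{X_c}$. The intermediate step is genuinely needed because $X$, viewed as the constant step functions $\{[x,1]\}$, is only a non-stable subset of $\mathbf{X_c}$, whereas $\mathbf{X_s}$ is, by construction (cf.\ Remark \ref{rem:stepfunc}), conditionally dense in $\mathbf{X_c}$. \emph{Step 1 (extension to step functions).} The stable step function $f_s\colon X_s\to\mathbb{R}_s$, $f_s(\sum x_i|a_i)=\sum f(x_i)|a_i$, induces a conditional function $\mathbf{f_s}\colon\mathbf{X_s}\to\mathbf{R_s}$; moreover any conditional extension of $f$ to $\mathbf{X_c}$ restricts to $\mathbf{f_s}$ on $\mathbf{X_s}$ by stability, so $\mathbf{f_s}$ is forced. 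I would show $\mathbf{f_s}$ is conditionally uniformly continuous with respect to $\mathbf{d_s}$ and $|\cdot-\cdot|$ on $\mathbf{R_s}$. Since step functions are conditionally dense in $\mathbf R$, it is enough to produce, for $\mathbf r=\sum r_i|a_i$ in $\mathbf{R_s}$ with $\mathbf r>\mathbf 0$, a conditional $\mathbf s>\mathbf 0$: classical uniform continuity of $f$ gives for each $i$ some $s_i>0$ with $d(x,y)<s_i\Rightarrow|f(x)-f(y)|\le r_i$, and one sets $\mathbf s:=\sum s_i|a_i$; evaluating $\mathbf{d_s}$ and $|\mathbf{f_s(\cdot)}-\mathbf{f_s(\cdot)}|$ atom by atom along the common refinement of the involved partitions gives $\mathbf{d_s(u,v)}<\mathbf s\Rightarrow|\mathbf{f_s(u)}-\mathbf{f_s(v)}|\le\mathbf r$ for all $\mathbf u,\mathbf v$ in $\mathbf{X_s}$.

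\emph{Step 2 (extension to the completion).} Given $\mathbf x$ in $\mathbf{X_c}$, pick by conditional density a conditional sequence $(\mathbf{x_n})$ in $\mathbf{X_s}$ conditionally converging to $\mathbf x$; it is conditionally Cauchy, so by Step 1 the sequence $(\mathbf{f_s(x_n)})$ is conditionally Cauchy in $\mathbf R$ and hence, by conditional completeness of $\mathbf R$, conditionally converges. Define $\mathbf{f_c(x)}$ to be its conditional limit. If $(\mathbf{x_n})$ and $(\mathbf{y_n})$ both converge to $\mathbf x$, then $\mathbf{d_s(x_n,y_n)}\to\mathbf 0$, so $|\mathbf{f_s(x_n)}-\mathbf{f_s(y_n)}|\to\mathbf 0$ by Step 1 and the limit is the same, so $\mathbf{f_c}$ is well defined; taking $\mathbf{x_n}=\mathbf x$ for $x\in X$ yields $f_c(x)=f(x)$. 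Stability of $\mathbf{f_c}$ — so that it is a bona fide conditional function $\mathbf{X_c}\to\mathbf R$ — is obtained by concatenating representing Cauchy sequences and using that conditional limits commute with concatenation along partitions. Finally $\mathbf{f_c}$ is conditionally uniformly continuous: given $\mathbf r>\mathbf 0$ take $\mathbf s$ for $\mathbf r/\mathbf 2$ from Step 1 and pass to the conditional limit in that inequality along $\mathbf{X_s}$-approximations, the halving absorbing the passage from strict to non-strict inequalities and the small errors coming from conditional density.

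\emph{Uniqueness.} If $\mathbf g\colon\mathbf{X_c}\to\mathbf R$ is conditionally continuous with $g(x)=f(x)$ on $X$, then $\mathbf g$, being a conditional (hence stable) function, agrees with $\mathbf{f_s}$ on the conditionally dense set $\mathbf{X_s}$; approximating an arbitrary $\mathbf x$ in $\mathbf{X_c}$ by a conditional sequence in $\mathbf{X_s}$ and invoking conditional continuity of $\mathbf g$ and of $\mathbf{f_c}$ gives $\mathbf g=\mathbf{f_c}$.

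\emph{Main obstacle.} Essentially the whole content beyond routine bookkeeping is Step 1, and within it the construction of the conditional modulus of continuity $\mathbf s$ by concatenating the classical moduli $s_i$ along a partition — this is where consistency and stability do the work, together with the conditional density of step functions in $\mathbf R$ which reduces an arbitrary conditional $\mathbf r>\mathbf 0$ to a step one. The only other delicate point is checking that the sequence-defined $\mathbf{f_c}$ is stable, hence a genuine conditional function; like the rest of the argument, this is handled by the combinatorics of concatenation rather than by any analytic estimate.
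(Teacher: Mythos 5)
Your proposal is correct and follows essentially the same route as the paper: extend $f$ to step functions by stability, build a conditional modulus of continuity by concatenating classical moduli along a partition (the paper reduces a general $\mathbf{r}>\mathbf{0}$ to a step bound via $\mathbf{0<1/k<r}$, you via conditional density of $\mathbf{R_s}$ in $\mathbf{R}$, which is the same maneuver), and define $\mathbf{f_c}$ as the conditional limit along $\mathbf{X_s}$-approximations using conditional completeness of $\mathbf{R}$. The well-definedness, uniform continuity of $\mathbf{f_c}$ and uniqueness steps that you spell out are exactly what the paper disposes of with ``inspection shows.''
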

We call $\mathbf{f_c}$ the \emph{conditionally uniformly continuous extension} of $f$.
\begin{proof}
 Define $\mathbf{f_c}\colon \mathbf{X_c}\to \mathbf{R}$ by $\mathbf{f_c(x)}=\lim_{\mathbf{n}\to \infty} \mathbf{f_s(x_n)}$ where $(\mathbf{x_n})$ is a conditional sequence in $\mathbf{X_s}$ conditionally converging to $\mathbf{x}$. 
 We show that $\lim_{\mathbf{n}\to \infty} \mathbf{f_s(x_n)}$ exists. 
 Fix $\mathbf{r}$ in $\mathbf{R}_{++}$ and choose $\mathbf{k}$ in $\mathbf{N_s}$ such that $\mathbf{0<1/k<r}$. 
 Suppose $k=\sum k_i|a_i\in \mathbb{N}_s$. 
 By the uniform continuity of $f$, there exists $t_i\in\mathbb{R}_{++}$ such that $x,y\in X$ with $d(x,y)<t_i$ implies $\vert f(x)-f(y)\vert < 1/k_i$ for each $i$. 
 Put $t=\sum t_i|a_i\in\mathbb{R}_s$. 
 Since $(\mathbf{x_n})$ is a conditional Cauchy sequence there exists $\mathbf{n}_0$ in $\mathbf{N_s}$ such that $\mathbf{d_s(x_n,x_m)<t}$ for all $\mathbf{n,m}\geqslant \mathbf{n}_0$. 
 By construction, $|\mathbf{f_s(x_n)}-\mathbf{f_s(x_m)}| <\mathbf{r}$ for all $\mathbf{n,m}\geqslant \mathbf{n}_0$. 
 The claim follows from the conditional completeness of $\mathbf{R}$. 
 Inspection shows that $\mathbf{f_c}$ is uniquely determined, independent of the choice of conditional representatives and conditionally uniformly continuous. 
\end{proof}
\begin{examples}\label{exp:ucts}
 \begin{itemize}[fullwidth]
  \item[(i)] Let $X$ be a normed vector space and $f\colon X\to \mathbb{R}$ a continuous and linear functional. 
  Its conditionally uniformly continuous extension $\mathbf{f_c\colon X_c\to R}$ is conditionally linear. 
  \item[(ii)] Let $\overline{\mathbb{R}}:=\mathbb{R}\cup\{\pm \infty\}$ be the extended real numbers and let $\overline{\mathbf{R}}$ denote the conditional completion of $\overline{\mathbb{R}}$. 
  The conditionally uniformly continuous extension $\arctan_\mathbf{c}\colon \overline{\mathbf{R}}\to \mathbf{R}$ of  $\arctan\colon\overline{\mathbb{R}}\to \mathbb{R}$ is conditionally strictly increasing. 
\end{itemize}
\end{examples}

\section{Conditional extension of lower semi-continuous functions}\label{sec:duality}

Throughout this section, let $(X,Y,\langle \cdot,\cdot \rangle)$ be a dual pair of normed vector spaces such that $|\langle x,y  \rangle|\leq\|x\|\|y\|$ for all $x\in X$ and $y \in Y$. We assume that $C^X:=\{x\in X\colon \|x\|\leq 1\}$ is $\sigma( X,Y )$-closed and $C^Y:=\{y\in Y\colon \|y\|\leq 1\}$ is $\sigma(Y,X)$-closed. 
\begin{theorem}\label{thm:impo01}
    There exists a unique conditional bilinear form $\langle\cdot,\cdot \rangle_\mathbf{c}$ on $\mathbf{X_c\times Y_c}$ with $\langle x,y \rangle_c=\langle x,y \rangle$ for all $x\in X$ and $y\in Y$ such that $\mathbf{(X_c,Y_c,\langle\cdot,\cdot\rangle{}_c)}$ is a conditional dual pair.\footnote{See \cite[Definition 5.6]{DJKK13}.} 
\end{theorem}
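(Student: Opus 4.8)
The plan is to build the pairing first on step functions---where stability leaves no freedom of choice---and then transport it to the conditional completions by the density argument already used for Proposition~\ref{p:uctsext}, finally reading the conditional dual pair axioms off from their classical analogues. For the first step, note that for fixed $y\in Y$ the map $x\mapsto\langle x,y\rangle$ is linear and $\norm{y}$-Lipschitz, hence uniformly continuous, and symmetrically in the second variable, so $\langle\cdot,\cdot\rangle$ is a bounded bilinear form. I would introduce the stable map $X_s\times Y_s\to\mathbb{R}_s$ sending $(\sum x_i\mid a_i,\sum y_j\mid b_j)$ to $\sum\langle x_i,y_j\rangle\mid a_i\wedge b_j$ and check it is well defined and stable, using that $(\mathbb{R}_s,+,\cdot)$ has stable operations; concatenating the classical bound $\abs{\langle x_i,y_j\rangle}\le\norm{x_i}\norm{y_j}$ gives $\abs{\langle\xi,\eta\rangle_s}\leqslant\norm{\xi}_s\norm{\eta}_s$. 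This induces a conditionally bilinear form $\langle\cdot,\cdot\rangle_s\colon\mathbf{X_s}\times\mathbf{Y_s}\to\mathbf{R_s}$, hence into $\mathbf{R}$, which is conditionally $\norm{\cdot}_s\norm{\cdot}_s$-bounded and which, on constant step functions, coincides with $\langle\cdot,\cdot\rangle$ (Remark~\ref{rem:stepfunc}).

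Next I would extend to the completions. By Theorem~\ref{thm:010101}, $\mathbf{X_s}$ is conditionally dense in $\mathbf{X_c}$ and $\mathbf{Y_s}$ in $\mathbf{Y_c}$. For $x$ in $\mathbf{X_c}$ and $y$ in $\mathbf{Y_c}$ choose conditional sequences $(\mathbf{\xi_n})$ in $\mathbf{X_s}$, $(\mathbf{\eta_n})$ in $\mathbf{Y_s}$ conditionally converging to $x$ resp.\ $y$, and set $\langle x,y\rangle_c:=\lim_{\mathbf{n}\to\infty}\langle\mathbf{\xi_n},\mathbf{\eta_n}\rangle_s$. The conditional limit exists: from $\abs{\langle\xi_n,\eta_n\rangle_s-\langle\xi_m,\eta_m\rangle_s}\leqslant\norm{\xi_n-\xi_m}_s\norm{\eta_n}_s+\norm{\xi_m}_s\norm{\eta_n-\eta_m}_s$ and the conditional boundedness of conditional Cauchy sequences, $(\langle\mathbf{\xi_n},\mathbf{\eta_n}\rangle_s)$ is conditionally Cauchy in $\mathbf{R}$, which is conditionally complete; the same estimate shows the value is independent of the chosen conditional representatives. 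This is the bilinear counterpart of Proposition~\ref{p:uctsext} and its proof adapts. Conditional bilinearity of $\langle\cdot,\cdot\rangle_c$, the bound $\abs{\langle x,y\rangle_c}\leqslant\norm{x}_c\norm{y}_c$, and the equality $\langle x,y\rangle_c=\langle x,y\rangle$ for $x\in X,\ y\in Y$ follow by passing to the conditional limit in the corresponding identities for $\langle\cdot,\cdot\rangle_s$, using conditional continuity of $+$, $\cdot$ on $\mathbf{R}$ and of the conditional vector space operations on $\mathbf{X_c}$, $\mathbf{Y_c}$.

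The step I expect to be the main obstacle is verifying that $\langle\cdot,\cdot\rangle_c$ conditionally separates $\mathbf{X_c}$ and $\mathbf{Y_c}$, which is exactly where the standing hypotheses on $C^X$ and $C^Y$ enter. Classically, $C^X$ being $\sigma(X,Y)$-closed yields the bipolar identity $\norm{x}=\sup\set{\abs{\langle x,y\rangle}\colon y\in C^Y}$ for every $x\in X$, and symmetrically with $C^Y$. I would transfer this as follows: let $x$ in $\mathbf{X_c}$ with $x\mid a\neq0\mid a$ for some $a>0$, so $\norm{x}_c\mid a>0\mid a$; pick $\mathbf{\xi_N}=\sum x_i\mid a_i$ in $\mathbf{X_s}$ with $\norm{x-\xi_N}_c$ conditionally small relative to $\norm{x}_c$ on $a$, so that $\norm{x_i}$ is bounded below by a positive multiple of $\norm{x}_c$ on every piece $a_i\wedge a>0$. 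The classical sup-formula provides, on each such piece, some $y_i\in C^Y$ with $\langle x_i,y_i\rangle>\frac12\norm{x_i}$; concatenating gives $\eta=\sum y_i\mid a_i$ in $\mathbf{Y_s}$ with $\norm{\eta}_s\leqslant1$ and $\langle\xi_N,\eta\rangle_s\mid a$ bounded below by a positive multiple of $\norm{x}_c$ on $a$, whence $\langle x,\eta\rangle_c\mid a\geqslant\langle\xi_N,\eta\rangle_s\mid a-\norm{x-\xi_N}_c\norm{\eta}_s\mid a>0\mid a$. The decisive point---and the reason the weak closedness is needed rather than merely that $(X,Y,\langle\cdot,\cdot\rangle)$ separates points---is that the sup-formula produces the $y_i$ with a lower bound uniform in $i$, so that the concatenation survives the estimate; the symmetric argument, with $C^Y$ being $\sigma(Y,X)$-closed, separates the second variable. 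Hence $\mathbf{(X_c,Y_c,\langle\cdot,\cdot\rangle_c)}$ is a conditional dual pair.

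For uniqueness, any conditional bilinear form $\langle\cdot,\cdot\rangle_c'$ on $\mathbf{X_c}\times\mathbf{Y_c}$ extending $\langle\cdot,\cdot\rangle$ is a conditional function, hence stable, so on $\mathbf{X_s}\times\mathbf{Y_s}$ it must equal $\sum\langle x_i,y_j\rangle\mid a_i\wedge b_j=\langle\cdot,\cdot\rangle_s$; being conditionally continuous---which, being the pairing of a conditional dual pair of normed spaces, it is---it then agrees with $\langle\cdot,\cdot\rangle_c$ on the conditionally dense subset $\mathbf{X_s}\times\mathbf{Y_s}$, and therefore everywhere on $\mathbf{X_c}\times\mathbf{Y_c}$.
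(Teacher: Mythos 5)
Your construction of $\langle\cdot,\cdot\rangle_{\mathbf{s}}$ on step functions and its extension to the completions via the Cauchy estimate is exactly the paper's route and is fine. The genuine gap is in the separation step. The ``classical bipolar identity'' you invoke, $\norm{x}=\sup\{\abs{\langle x,y\rangle}\colon y\in C^Y\}$, does \emph{not} follow from the standing hypotheses: it is equivalent to $C^X$ being the polar of $C^Y$, which is strictly stronger than assuming that each unit ball is weakly closed and that $\abs{\langle x,y\rangle}\le\norm{x}\norm{y}$. Concretely, take $X=\ell^2$ with its usual norm and $Y=\{y\in\ell^2\colon \sum n^2y_n^2<\infty\}$ with $\norm{y}_Y=(\sum n^2y_n^2)^{1/2}$, paired by the $\ell^2$ inner product. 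Then $\abs{\langle x,y\rangle}\le\norm{x}_2\norm{y}_Y$, both unit balls are weakly closed for the respective weak topologies (each norm is a supremum of pairing functionals given by finitely supported vectors), yet $\sup\{\abs{\langle e_n,y\rangle}\colon y\in C^Y\}=1/n$, so no inequality $\sup_{y\in C^Y}\abs{\langle x,y\rangle}\ge c\norm{x}$ holds. Consequently your choice of $y_i\in C^Y$ with $\langle x_i,y_i\rangle>\tfrac12\norm{x_i}$ need not exist, and if you settle for any $y_i\in C^Y$ with $\langle x_i,y_i\rangle\ge\delta_i>0$, the final estimate collapses: the error term $\norm{x-\xi_N}_c\norm{\eta}_s$ is of the order of $\norm{x}_c$, while the main term is only bounded below by $\delta$, which cannot be compared to $\norm{x}_c$ without the sup-formula. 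So your argument, as written, does not prove conditional separation; note also that your ``decisive point'' is misdiagnosed: no uniformity in $i$ is ever needed, since a concatenation $\sum\delta_i|a_i$ of strictly positive reals is conditionally strictly positive even when $\delta_i\to 0$.

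The paper avoids this by separating differently: instead of separating $\mathbf{0}$ from the point $\mathbf{x}$ with a norm-controlled functional, it chooses $\mathbf{r>0}$ with $\mathbf{0}$ conditionally outside the conditional ball $\mathbf{C_r(x)}$, picks a step function $x^\prime=\sum x_i^\prime|a_i$ with $\mathbf{x}$ in $\mathbf{C_{r/2}(x^\prime)}$ and $\mathbf{0}$ outside it, and then applies classical strong separation, piecewise, of the point $0$ from the $\sigma(X,Y)$-closed convex set $C^X_{r_i/2}(x_i^\prime)$ (this is where the weak closedness of $C^X$ enters), obtaining $y_i\in Y$ with $\inf_{z\in C^X_{r_i/2}(x_i^\prime)}\langle z,y_i\rangle\ge\delta_i>0$. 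Because the lower bound holds on the whole ball, it passes to the conditional ball by approximating any of its conditional elements by step functions inside it, and in particular to $\mathbf{x}$; no comparison between $\delta_i$ and $\norm{x}$ is needed, and the $y_i$ need not lie in $C^Y$. If you want to salvage your strategy you must either strengthen the hypotheses to $C^X=(C^Y)^\circ$ (so that the sup-formula holds) or switch to the paper's ball-separation argument.
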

We call $\langle\cdot,\cdot \rangle_\mathbf{c}$ the \emph{conditional extension} of the duality pairing $\langle \cdot,\cdot \rangle$. 
\begin{proof}
    Let $(\mathbf{x_n})$ be a conditional Cauchy sequence in $\mathbf{X_s}$ and $(\mathbf{y_n})$ a conditional Cauchy sequence in $\mathbf{Y_s}$. 
    By the conditional triangle inequality, for all $\mathbf{r>0}$ there is $\mathbf{n}_0\geqslant \mathbf{1}$ such that 
    \begin{equation}\label{eq:impo01}
        \mathbf{\vert\langle x_n,y_n\rangle_s - \langle x_m, y_m\rangle_s \vert\leqslant \Vert x_n - x_m\Vert_s \Vert y_n \Vert_s+ \Vert x_m \Vert_s \Vert y_n - y_m\Vert_s \leqslant r}
    \end{equation}
    for all $\mathbf{n,m}\geqslant\mathbf{n}_0$.
    Put $\langle [(\mathbf{x_n})], [(\mathbf{y_n})] \rangle_{\mathbf{c}}:=\lim_{\mathbf{n}\to \infty}\mathbf{\langle x_n,y_n\rangle_s}$ for $[(\mathbf{x_n})]$ in $\mathbf{X_c}$ and $[(\mathbf{y_n})]$ in $\mathbf{Y_c}$. 
    By \eqref{eq:impo01} and the conditional completeness of the conditional real line, $\langle \cdot,\cdot \rangle_\mathbf{c}$ is independent of conditional representatives and conditionally real-valued, and thus well-defined. 
    Since $\langle \cdot,\cdot\rangle_\mathbf{s}$ is a conditional $\mathbf{R_s}$-bilinear form and since the conditional sum and the conditional product of two conditionally convergent sequences in $\mathbf{R}$ are conditionally convergent, $\langle\cdot,\cdot\rangle_\mathbf{c}$ is a conditional bilinear form on $ \mathbf{X_c}\times\mathbf{Y_c}$.  
    By \eqref{eq:impo01}, the conditional pairing $\langle \cdot,\cdot\rangle_\mathbf{c}$ is the unique conditional bilinear form conditionally extending $\langle\cdot,\cdot\rangle$. 
    
    We show that $\langle\cdot,\cdot\rangle_\mathbf{c}$ conditionally separates points. 
    Let $\mathbf{x}$ be a conditionally non-zero element in $\mathbf{X_c}$, that is, $x|a\neq 0|a$ for every $a>0$. 
    There is $\mathbf{r>0}$ with $r=\sum r_i\mid a_i$ such that $\mathbf{0}$ is conditionally not in the conditional closed ball $\mathbf{C_r(x):=\{z \text{ in } X_c\colon \|z-x\|_c\leqslant r\}}$, that is, $0|a \not \in C_r(x)|a$ for every $a>0$. 
    Since $\mathbf{X_s}$ is conditionally dense in $\mathbf{X_c}$ by Theorem \ref{thm:010101}, we find $\mathbf{x^\prime}$ in $\mathbf{X_s}$ such that $\mathbf{0}$ is conditionally not in $\mathbf{C_{r/2}(x^\prime)}$ and $\mathbf{x}$ is in $\mathbf{C_{r/2}(x^\prime)}$.
    Suppose, without loss of generality, that $x^\prime=\sum x^\prime_i\mid a_i$.\footnote{That is, $x^\prime$ and $r$ have the same partition. Otherwise consider $x^\prime$ and $r$ on the common refinement of the respective partitions.} 
    Note that
    \begin{equation*}
        C_{r/2}(x^\prime)=\left\{z\in X_c\colon z\mid a_i \in C_{r_i/2}(x^\prime_i)| a_i \text{ for all } i\right\}. 
    \end{equation*}
    Since $C_{r_i/2}^X(x_i^\prime):=\{z\in X\colon \| x_i^\prime - z \| \leq r_i/2\}$ is $\sigma(X,Y)$-closed, by strong separation, there exists a non-zero $y_i \in Y$ such that $\inf_{z\in C_{r_i/2}^X(x_i^\prime)}\langle z,y_i\rangle\geq \delta_i>0$. 
    It follows that $\mathbf{\inf_{z\text{ in }C_{r/2}(x^\prime)}\langle z,y\rangle_c\geqslant \delta >0}$ where $y=\sum y_i|a_i$ and $\delta=\sum \delta_i|a_i$.
    Indeed, let $\mathbf{z}$ be in $\mathbf{C_{r/2}(x^\prime)}$ and pick a conditional sequence $\mathbf{(z_n)}$ in $\mathbf{C_{r/2}(x^\prime)}\sqcap \mathbf{X_s}$ such that $\mathbf{\|z_n-z\|_c}\to \mathbf{0}$.
    Since $z_n$ has the form $\sum z_{j,n}|b_{j,n}$, it follows that $\|z_{j,n}-x^\prime_i\|\leq r_i/2$ whenever $b_{j,n}\wedge a_i>0$.
    By stability of $\langle \cdot,\cdot \rangle_s$, we obtain
    \begin{equation*}
        \langle z_n,y\rangle=\left\langle \sum z_{j,n}|b_{j,n},\sum y_{i}|a_i \right\rangle_{s}=\sum \langle z_{j,n},y_i \rangle|b_{j,n}\wedge a_i\geqslant \sum \delta_i|b_{j,n}\wedge a_i=\sum \delta_i|a_i=\delta>0.
    \end{equation*}
    By \eqref{eq:impo01}, it follows that
    \begin{equation*}
        \mathbf{\langle z,y \rangle_c=\langle z-z_n,y \rangle_{c}+\langle z_n,y \rangle_c\geqslant \langle z-z_n,y \rangle_c+\delta \xrightarrow[n\to \infty]{} \delta>0},
    \end{equation*}
    showing that $\mathbf{\inf_{z\text{ in }C_{r/2}(x^\prime)}\langle z,y\rangle_c\geqslant \delta >0}$.
    Hence $\mathbf{Y_s}$, and therefore $\mathbf{Y_c}$, conditionally separates the points of $\mathbf{X_c}$.
    By symmetry, $\mathbf{X_c}$ conditionally separates the points of $\mathbf{Y_c}$.
    Thus $\mathbf{(X_c,Y_c,\langle \cdot,\cdot \rangle_c)}$ is a conditional dual pair.   
\end{proof}
\begin{examples}\label{r:dualpairing}
 \begin{itemize}[fullwidth]
 \item[(i)] If $X=L^p$ and $Y=L^q$ on a finite measure space $(S,\mathcal{S},\mu)$ with $1\leq p,q\leq \infty$ and $1/p+1/q\leq 1$, then $|\langle x,y\rangle|\leq \| x\|_p \|y\|_q$ for all $x\in L^p$ and $y\in L^q$. 
        Moreover, $C^{L^p}$ is $L^1$-closed and therefore $\sigma(L^1,L^\infty)$-closed. Since the identity from $(L^p,\sigma(L^p,L^q))$ to $(L^1,\sigma(L^1,L^\infty))$ is continuous, it follows that $C^{L^p}$ is $\sigma(L^p,L^q)$-closed. That $C^{L^q}$ is $\sigma(L^q,L^p)$-closed works analogously.
  \item[(ii)] In the case that $Y=X^\ast$ is the norm dual of $X$, the assumptions of Theorem \ref{thm:impo01} are satisfied for the duality pairing $\langle x,x^\ast\rangle :=x^\ast(x)$.
    Indeed, since $C^X$ is norm-closed and convex, it is also $\sigma(X,X^\ast)$-closed.
    On the other hand, since $C^{X^\ast}$ is the absolute polar of $C^X$, it follows from the Banach-Alaoglu theorem that $C^{X^\ast}$ is $\sigma(X^\ast,X)$-compact and therefore $\sigma(X^\ast,X)$-closed.
\end{itemize}
\end{examples}
Let $\mathbf{\overline{R}}$ denote the conditional extended real line which is obtained by a conditional completion of the extended real line $\overline{\mathbb{R}}:=\mathbb{R}\cup\{\pm\infty\}$.  
Denote by $\overline{R}$ the stable set which induces $\mathbf{\overline{R}}$.  
We endow $\mathbf{X_c}$ with the conditional $\sigma(\mathbf{X_c,Y_c})$-topology with conditional neighbourhood basis\footnote{For the concept of a conditional neighbourhood basis, see \cite[Definition 3.14]{DJKK13}. A conditionally finite family is defined in \cite[Definitions 2.20 and 2.23]{DJKK13}.} 
\begin{equation*}
 \mathcal{V}(\mathbf{x})=\left\{\mathbf{V^{r,x}_{(y_l)_{1\leqslant l\leqslant n}}}\colon \mathbf{r} \text{ in }\mathbf{R_{++}},\, \mathbf{(y_l)_{1\leqslant l\leqslant n}} \text{ conditionally finite family in } \mathbf{Y_c}\right\}
\end{equation*}
where
\begin{equation*}
 \mathbf{V^{r,x}_{(y_l)_{1\leqslant l\leqslant n}}}:=\{\mathbf{z} \text{ in }\mathbf{X_c}\colon \vert \langle \mathbf{z-x, y_l}\rangle_\mathbf{c} \vert \leqslant \mathbf{r} \text{ for all } \mathbf{1\leqslant l\leqslant n}\}
\end{equation*}
for all $\mathbf{x}$ in $\mathbf{X_c}$. 
Recall that $f_s(\sum x_i|a_i):=\sum f(x_i)|a_i$ for all $\sum x_i|a_i\in X_s$.
\begin{definition}\label{d:lsc}
We say that a function $f\colon X\to \overline{R}$ is 
 \begin{itemize}
  \item \emph{lower semi-continuous} whenever
 \begin{equation}\label{eq:lsc}
  f(x)=\sup_{V\in \mathcal{V}(x)}\inf\{f_s(z)\colon z\in V\cap X_s\}
 \end{equation}
 for all $x\in X$;\footnote{We denote by $V\subseteq X_c$ the stable set which induces $\mathbf{V}\in\mathcal{V}(\mathbf{x})$. Let $\mathcal{V}(x)$ denote the collection of all stable sets $V$ such that $\mathbf{V}\in\mathcal{V}(\mathbf{x})$. Note that $V\cap X_s\neq\emptyset$ for all $V\in\mathcal{V}(x)$ and $x\in X$.}
\item and \emph{proper convex} whenever $-\infty <f(x)$ for all $x\in X$ and $f(x_0)\in R$ for at least one $x_0\in X$ and\footnote{On the right-hand side of the following inequality $\mathbb{R}$ is identified with a non-stable subset of $R$, see Remark \ref{rem:stepfunc}.} $f(\lambda x + (1-\lambda)y)\leqslant \lambda f(x)+(1-\lambda)f(y)$ for all $\lambda\in\mathbb{R}$ with $0\leq \lambda \leq 1$ and every $x,y\in X$.
\end{itemize}
 We say that a conditional function $\mathbf{f\colon X_c\to \overline{R}}$ is 
 \begin{itemize}
  \item \emph{conditionally lower semi-continuous} whenever
 \begin{equation*}
  \mathbf{f}(\mathbf{x})=\sup_{\mathbf{V}\in\mathcal{V}(\mathbf{x})}\mathbf{\inf\{f(z)\colon z\text{ in }V\}} 
 \end{equation*} 
 for all $\mathbf{x}$ in $\mathbf{X_c}$; 
 \item and \emph{conditionally proper convex} whenever $-\infty <\mathbf{f(x)}$ for all $\mathbf{x}$ in $\mathbf{X_c}$ and $\mathbf{f}(\mathbf{x}_0)$ in $\mathbf{R}$ for at least one $\mathbf{x}_0$ in $\mathbf{X_c}$ and $\mathbf{f(\lambda x + (1-\lambda)y)\leqslant \lambda f(x)+(1-\lambda)f(y)}$ for all $\lambda$ in $\mathbf{R}$ with $\mathbf{0\leqslant \lambda \leqslant 1}$ and every $\mathbf{x,y}$ in $\mathbf{X_c}$.
 \end{itemize} 
\end{definition} 
\begin{remark}\label{r:lsc}
The definition of conditional lower semi-continuity is a conditional version of the classical definition of lower semi-continuity for extended real-valued functions on a topological space, see e.g.~\cite[Chapter 6, Section 2]{garling2012topological}, and thus is equivalent to each of the following conditions:
\begin{itemize}
 \item[(i)] $\mathbf{f_c(x)\leqslant \liminf f_c(x_\alpha)}$ for every conditional net\footnote{See \cite[Example 2.21]{DJKK13}.} $(\mathbf{x_\alpha})$ conditionally converging to $\mathbf{x}$;
 \item[(ii)] the conditional lower level set $\{\mathbf{x}\text{ in }\mathbf{X_c}\colon \mathbf{f_c(x)\leqslant r}\}$ is conditionally closed for all $\mathbf{r}$ in $\mathbf{R}$.
\end{itemize}
If the conditional set $\mathbf{X_s}$, seen as a conditional subset of $\mathbf{X_c}$, is endowed with the conditional relative $\sigma(\mathbf{X_c},\mathbf{Y_c})$-topology,\footnote{See \cite[Example 3.9.1]{DJKK13}.} then the lower semi-continuity property \eqref{eq:lsc} expresses the fact that $\mathbf{f_s}$ as a conditional function from $\mathbf{X_s}$ to $\overline{\mathbf{R}}$ is conditionally lower semi-continuous. 
In particular, $f\colon X\to \overline{R}$ is lower semi-continuous if and only if $f(x)\leqslant \liminf f_s(x_\alpha)$ for every stable net $(x_\alpha)$ in $X_s$ converging to $x$ with respect to the classical topology on $X_s$ related to the conditional topology on $\mathbf{X_s}$.\footnote{For the relation between a classical topology on the stable set $X_s$ and a conditional topology on $\mathbf{X_s}$, see \cite[Proposition 3.5]{DJKK13}.}

Let $\mathscr{V}(x)$ denote a classical weak neighbourhood basis of $x\in X$ induced by the dual pair $(X,Y,\langle \cdot,\cdot \rangle)$ and $f\colon X\to\overline{R}$ be a function. 
By inspection, if 
\begin{equation*}
 f(x)=\sup_{V\in \mathscr{V}(x)}\inf\{f(z)\colon z\in V\}
\end{equation*}
for all $x\in X$, then $f$ is lower semi-continuous in the above sense. 
\end{remark}
In the context of Boolean-valued analysis, an extension result for strongly lower semi-continuous functions from a Banach space into the space of continuous functions on an extremally disconnected compact space is stated in \cite[Theorem 5.4]{kutateladze06}.  
The following theorem establishes a conditional extension result for lower semi-continuous functions in the sense of the previous definition which is necessary for the Fenchel-Moreau type vector duality result. 
\begin{theorem}\label{t:lscextension}
 Let $f\colon X\to \overline{R}$ be lower semi-continuous. 
 Then there exists a conditionally lower semi-continuous function $\mathbf{f_c\colon \mathbf{X_c}\to \overline{R}}$ such that $f_c(x)=f(x)$ for all $x\in X$.  
 Moreover, if $f$ is proper convex, then $\mathbf{f_c}$ is conditionally proper convex.  
\end{theorem}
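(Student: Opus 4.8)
The plan is to take for $\mathbf{f_c}$ the conditional lower semi-continuous hull of the conditional step function $\mathbf{f_s}\colon\mathbf{X_s}\to\overline{\mathbf{R}}$ with respect to the conditional $\sigma(\mathbf{X_c},\mathbf{Y_c})$-topology, that is,
\[
 \mathbf{f_c(x)}:=\sup_{\mathbf{V}\in\mathcal{V}(\mathbf{x})}\inf\bigl\{\mathbf{f_s(z)}\colon\mathbf{z}\text{ in }\mathbf{V}\sqcap\mathbf{X_s}\bigr\},\qquad\mathbf{x}\text{ in }\mathbf{X_c}.
\]
Since $\mathbf{X_s}$ is conditionally dense in $\mathbf{X_c}$ by Theorem \ref{thm:010101}, the conditional set $\mathbf{V}\sqcap\mathbf{X_s}$ is conditionally non-empty for every $\mathbf{V}\in\mathcal{V}(\mathbf{x})$, and by conditional Dedekind completeness of $\overline{\mathbf{R}}$ the conditional infima and the conditional supremum above are well-defined conditional elements; that $\mathbf{f_c}$ is a conditional function is a routine check that the formula is compatible with concatenations. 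The conceptual point is Remark \ref{r:lsc}: the hypothesis that $f$ is lower semi-continuous says precisely that $\mathbf{f_s}$, seen as a conditional function on $\mathbf{X_s}$ equipped with the conditional relative $\sigma(\mathbf{X_c},\mathbf{Y_c})$-topology, is conditionally lower semi-continuous.

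I would then verify the two defining properties. For conditional lower semi-continuity of $\mathbf{f_c}$: given a conditional net $(\mathbf{x_\alpha})$ conditionally converging to $\mathbf{x}$ and a conditionally open $\mathbf{V}\in\mathcal{V}(\mathbf{x})$, the net is eventually conditionally in $\mathbf{V}$, hence $\mathbf{f_c(x_\alpha)}\geqslant\inf\{\mathbf{f_s(z)}\colon\mathbf{z}\text{ in }\mathbf{V}\sqcap\mathbf{X_s}\}$ eventually; passing to the conditional $\liminf$ and then to the conditional supremum over $\mathbf{V}$ yields $\mathbf{f_c(x)}\leqslant\liminf\mathbf{f_c(x_\alpha)}$, which by Remark \ref{r:lsc}(i) is conditional lower semi-continuity. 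For the extension property: when $\mathbf{x_0}$ is a step function, the conditional sets $\mathbf{V}\sqcap\mathbf{X_s}$ with $\mathbf{V}\in\mathcal{V}(\mathbf{x_0})$ form a conditional neighbourhood basis of $\mathbf{x_0}$ for the conditional relative topology on $\mathbf{X_s}$, so $\mathbf{f_c(x_0)}$ is the value at $\mathbf{x_0}$ of the conditional lower semi-continuous hull of $\mathbf{f_s}$ computed inside $\mathbf{X_s}$; since $\mathbf{f_s}$ is already conditionally lower semi-continuous on $\mathbf{X_s}$ this value equals $\mathbf{f_s(x_0)}$. Specialising to the constant step functions $x|1$ for $x\in X$ gives $f_c(x)=f(x)$.

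For the proper convex statement I would argue in four steps. First, convexity of $f$ and stability of $f_s$ give that $\mathbf{f_s}$ is conditionally convex on $\mathbf{X_s}$ for scalars in $\mathbf{R_s}$. Second, since $\mathbf{X_s}$ is stable under $\mathbf{R_s}$-affine combinations and the maps $(\mathbf{u},\mathbf{v})\mapsto\lambda\mathbf{u}+(1-\lambda)\mathbf{v}$ with $\lambda\in\mathbf{R_s}$ are conditionally $\sigma(\mathbf{X_c},\mathbf{Y_c})$-continuous, a direct neighbourhood computation shows that the hull $\mathbf{f_c}$ remains conditionally convex for $\mathbf{R_s}$-scalars. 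Third, using this together with conditional lower semi-continuity one runs the classical argument: if $\mathbf{f_c}$ were $-\infty$ on some $a>0$, it would be $-\infty$ on $a$ along the $\mathbf{R_s}$-segment joining $\mathbf{x}$ to a point $x_0\in X$ with $f(x_0)\in R$, contradicting $\mathbf{f_c(x_0)}=f(x_0)\in\mathbf{R}$ via lower semi-continuity at $x_0$; hence $-\infty<\mathbf{f_c(x)}$ for all $\mathbf{x}$ in $\mathbf{X_c}$. Fourth, for arbitrary $\lambda\in\mathbf{R}$ with $\mathbf{0}\leqslant\lambda\leqslant\mathbf{1}$, I would pick $\lambda_n\in\mathbf{R_s}$ conditionally converging to $\lambda$, note that $\lambda_n\mathbf{x}+(1-\lambda_n)\mathbf{y}$ conditionally converges to $\lambda\mathbf{x}+(1-\lambda)\mathbf{y}$ in $\sigma(\mathbf{X_c},\mathbf{Y_c})$, and combine conditional lower semi-continuity with the second and third steps to pass to the conditional $\liminf$, obtaining full conditional convexity; together with $\mathbf{f_c(x_0)}=f(x_0)\in\mathbf{R}$ this makes $\mathbf{f_c}$ conditionally proper convex.

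The main obstacle is the clash between the two scalar rings: the conditionally dense subset $\mathbf{X_s}$ is only stable under scalars from $\mathbf{R_s}$, so the hull construction delivers convexity over $\mathbf{R_s}$ directly but forces the bootstrap to all of $\mathbf{R}$ using the weak continuity of scalar multiplication and the lower semi-continuity of $\mathbf{f_c}$; this is why $\mathbf{f_c}>-\infty$ must be secured before the limit passage, as otherwise the identity $\liminf_n\bigl(\lambda_n\mathbf{f_c(x)}+(1-\lambda_n)\mathbf{f_c(y)}\bigr)=\lambda\mathbf{f_c(x)}+(1-\lambda)\mathbf{f_c(y)}$ could fail. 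A secondary, more clerical matter is to confirm that the conditional infima in the definition of $\mathbf{f_c}$ coincide with the classical infima over the stable sets $V\cap X_s$ appearing in \eqref{eq:lsc}, which is what ties the conditional and the original formulations together.
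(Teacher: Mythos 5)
Your proposal is correct and follows essentially the same route as the paper: the same conditional lower semi-continuous hull $\mathbf{f_c(x)}=\sup_{\mathbf{V}\in\mathcal{V}(\mathbf{x})}\inf\{\mathbf{f_s(z)}\colon \mathbf{z}\text{ in }\mathbf{V}\sqcap\mathbf{X_s}\}$, the same non-emptiness/density step, the same $\mathbf{R_s}$-convexity via the neighbourhood computation followed by approximation of $\lambda$ in $\mathbf{R}$ and the same segment argument for properness. The only deviations are cosmetic: you verify conditional lower semi-continuity through conditional nets and Remark \ref{r:lsc}(i) where the paper computes directly with $\arctan_\mathbf{c}$, and you sensibly secure $\mathbf{f_c}>-\infty$ before the bootstrap to $\mathbf{R}$-scalars, whereas the paper does this in the reverse order.
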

We call $\mathbf{f_c}$ a \emph{conditionally lower semi-continuous extension} of $f$. 
\begin{proof}
We prove that $\mathbf{f_c\colon X_c\to \overline{R}}$ given by 
\begin{equation*}
  \mathbf{f_c(x):=\sup_{V\in \mathcal{V}(x)}\inf\{f_s(z)\colon z \text{ in } V\sqcap X_s\}} 
 \end{equation*}
for all $\mathbf{x}$ in $\mathbf{X_c}$ is a conditionally lower semi-continuous extension of $f$. 
If $\mathbf{f_c}$ were a conditional function, by definition, $f_c(x)=f(x)$ for all $x\in X$ showing that $\mathbf{f_c}$ is a conditional extension.\footnote{If there is a conditional element $\mathbf{x}$ in $\mathbf{V\sqcap X_s}$, then $\mathbf{V\sqcap X_s=V\cap X_s}$. 
For more details on the conditional intersection, we refer to the proof of \cite[Theorem 2.9]{DJKK13}.}
The remaining proof is divided into three steps.
\begin{itemize}[fullwidth]
 \item[Step 1:] 
  We show that for all $\mathbf{x}$ in $\mathbf{X_c}$ and $\mathbf{V}\in \mathcal{V}(\mathbf{x})$ there exists a conditional element $\mathbf{x}^\prime$ in $\mathbf{X_s}$ such that $\mathbf{x}^\prime$ is a conditional element in $\mathbf{V\sqcap X_s}$ which implies that $\mathbf{f_c}$ is a well-defined conditional function since $\mathbf{f_s}$ is a conditional function on the conditional subset $\mathbf{V\sqcap X_s}$ of $\mathbf{X_s}$. 
 Indeed, let $\mathbf{x}$ be in $\mathbf{X_c}$, $(\mathbf{x_m})$ a conditional sequence in $\mathbf{X_s}$ conditionally converging to $\mathbf{x}$ and  $\mathbf{V^{r,x}_{(y_l)_{1\leqslant l\leqslant n}}}\in \mathcal{V}(\mathbf{x})$. 
 By \eqref{eq:impo01}, there is $\mathbf{m_1}$ in $\mathbf{N_s}$ such that $|\langle \mathbf{x_{m_1}-x},\mathbf{y_1} \rangle_\mathbf{c}|\leqslant \mathbf{r}$. 
 Suppose $\mathbf{n}$ is induced by the step function $\sum n_i|a_i$. 
 Fix $i$ and choose $\mathbf{m}_i\geqslant \mathbf{m_1}$ in $\mathbf{N_s}$ such that $|\langle \mathbf{x}_{\mathbf{m}_i}-\mathbf{x},\mathbf{y_l} \rangle_\mathbf{c}|\leqslant \mathbf{r}$ for all $\mathbf{l}$ induced by step functions of the form $l|a_i + 1|a_i^c$ with $1\leq l \leq n_i$. 
 Set $m=\sum m_i|a_i$. 
 By stability, $|\langle \mathbf{x_{m} - x,y_l} \rangle_\mathbf{c}|\leqslant \mathbf{r}$ for all $\mathbf{1\leqslant l\leqslant n}$. 
 Hence $\mathbf{x^\prime:=x_m}$ in $\mathbf{X_s}$ is in $\mathbf{V^{r,x}_{(y_l)_{1\leqslant l\leqslant n}}}$. 
 \item[Step 2:] We show that $\mathbf{f_c}$ is conditionally lower semi-continuous. 
Let $\mathbf{x}$ be in $\mathbf{X_c}$ and $\mathbf{h}$ in $\mathbf{R}_{++}$. 
 Since $\arctan_\mathbf{c}(\mathbf{f_c}(\mathbf{x})) -\mathbf{h}<\arctan_\mathbf{c}(\mathbf{f_c}(\mathbf{x}))$, there exists $\mathbf{V=V^{r,x}_{(y_l)_{1\leqslant l\leqslant n}}}\in \mathcal{V}(\mathbf{x})$ such that
 \begin{equation*}
 \arctan_\mathbf{c}(\mathbf{f_c}(\mathbf{x})) -\mathbf{h}\leqslant \inf\{\mathbf{\arctan_\mathbf{c}(f_s(z))}\colon \mathbf{z} \text{ in } \mathbf{V}\sqcap \mathbf{X_s}\}.
 \end{equation*}
 Let $\mathbf{w}$ be in $\mathbf{V^{r/2,x}_{(y_l)_{1\leqslant l\leqslant n}}}$. 
 By the conditional triangle inequality, $\mathbf{V}_0\sqsubseteq \mathbf{V}$ for $\mathbf{V}_0=\mathbf{V^{r/2,w}_{(y_l)_{1\leqslant l\leqslant n}}}\in \mathcal{V}(\mathbf{w})$.  
 From $\mathbf{V}_0\sqcap \mathbf{X_s}\sqsubseteq \mathbf{V}\sqcap \mathbf{X_s}$ it follows that 
 \begin{align*}
 \arctan_\mathbf{c}(\mathbf{f_c}(\mathbf{x})) -\mathbf{h} \leqslant \inf\{\mathbf{\arctan_\mathbf{c}(f_s(z))}\colon \mathbf{z} \text{ in } \mathbf{V}_0\sqcap \mathbf{X_s}\} 
 \end{align*}
 and thus
 \begin{align*}
 \arctan_\mathbf{c}(\mathbf{f_c}(\mathbf{x})) -\mathbf{h}\leqslant\mathbf{\arctan_c(f_c(w))}. 
 \end{align*}
 By the previous argument, for each $\mathbf{h}$ in $\mathbf{R}_{++}$ there exists $\mathbf{V_h}\in \mathcal{V}(\mathbf{x})$ such that $\arctan_\mathbf{c}(\mathbf{f_c}(\mathbf{x})) -\mathbf{h} \leqslant \mathbf{\arctan_c(f_c(w))}$ for all $\mathbf{w}$ in $\mathbf{V_h}$. 
 This implies that 
 \begin{equation*}
 \mathbf{\arctan_c(f_c(x)) - h}\leqslant  \mathbf{\sup_{V\in \mathcal{V}(x)}\inf\{\arctan_c(f_c(z))\colon z\text{ in }V\}}.
 \end{equation*}
 By letting $\mathbf{h\downarrow 0}$ and since $\mathbf{\sup_{V\in \mathcal{V}(x)}\inf\{\arctan_c(f_c(z))\colon z\text{ in } V\}\leqslant \arctan_c(f_c(x))}$ is trivially satisfied, it follows from the conditional strict monotonicity of $\arctan_\mathbf{c}$ that 
  \begin{equation*}
 \mathbf{f_c(x) = \sup_{V\in \mathcal{V}(x)}\inf\{f_c(z)\colon z\text{ in } V\}}.
 \end{equation*}
 \item[Step 3:] First, we show that $\mathbf{f_c}$ is conditionally convex. 
 For $\mathbf{x},\mathbf{x}^\prime$ in $\mathbf{X_s}$ and $\lambda$ in $\mathbf{R_s}$ with $\mathbf{0\leqslant \lambda \leqslant 1}$, it follows from the hypothesis that 
 \begin{equation*}
  \mathbf{f_s}(\lambda \mathbf{x} + (1-\lambda) \mathbf{x}^\prime)\leqslant \lambda \mathbf{f_s}(\mathbf{x}) + (1-\lambda) \mathbf{f_s}(\mathbf{x}^\prime). 
 \end{equation*}
Let $\mathbf{x},\mathbf{x}^\prime$ be in $\mathbf{X_c}$ and $\lambda$ in $\mathbf{R_s}$ with $\mathbf{0\leqslant \lambda \leqslant 1}$. 
Let $\mathbf{V}=\mathbf{V^{r,\lambda \mathbf{x}+(1-\lambda)\mathbf{x}^\prime}_{(y_l)_{1\leqslant l \leqslant n}}}\in \mathcal{V}(\lambda \mathbf{x}+(1-\lambda)\mathbf{x}^\prime)$, $\mathbf{W}=\mathbf{V}^{\mathbf{r},\mathbf{x}}_{\mathbf{(y_l)_{1\leqslant l \leqslant n}}}\in \mathcal{V}(\mathbf{x})$ and $\mathbf{W}^\prime=\mathbf{V}^{\mathbf{r},\mathbf{x}^\prime}_{\mathbf{(y_l)_{1\leqslant l \leqslant n}}}\in \mathcal{V}(\mathbf{x}^\prime)$. 
It can be checked that if $\mathbf{z}$ is in $\mathbf{W}\sqcap \mathbf{X_s}$ and $\mathbf{z}^\prime$ is in $\mathbf{W}^\prime\sqcap \mathbf{X_s}$, then $\lambda \mathbf{z} + (\mathbf{1}-\lambda)\mathbf{z}^\prime$ is in $\mathbf{V}\sqcap \mathbf{X_s}$. 
Therefore, one has 
\begin{align*}
 \mathbf{\inf\{f_s(z)\colon \mathbf{z}\text{ in }\mathbf{V\sqcap X_s}\}}&\leqslant\inf\{\mathbf{f_s}(\lambda \mathbf{z} + (\mathbf{1}-\lambda)\mathbf{z}^\prime)\colon \mathbf{z}\text{ in }\mathbf{W}\sqcap \mathbf{X_s}, \, \mathbf{z}^\prime\text{ in }\mathbf{W}^\prime\sqcap \mathbf{X_s}\}\\
 &\leqslant \inf\{\lambda \mathbf{f_s}(\mathbf{z})+(\mathbf{1}-\lambda)\mathbf{f_s}(\mathbf{z}^\prime)\colon \mathbf{z}\text{ in }\mathbf{W}\sqcap \mathbf{X_s},\, \mathbf{z}^\prime\text{ in }\mathbf{W}^\prime\sqcap \mathbf{X_s}\}\\
 &=\lambda \inf\{\mathbf{f_s}(\mathbf{z})\colon \mathbf{z}\text{ in }\mathbf{W}\sqcap \mathbf{X_s}\} + (\mathbf{1}-\lambda) \inf\{\mathbf{f_s}(\mathbf{z}^\prime)\colon \mathbf{z}^\prime\text{ in }\mathbf{W}^\prime\sqcap \mathbf{X_s}\}. 
\end{align*}
We found for every $\mathbf{V}\in\mathcal{V}(\lambda \mathbf{x}+(\mathbf{1}-\lambda)\mathbf{x}^\prime)$ conditional neighbourhoods $\mathbf{W}\in\mathcal{V}(\mathbf{x})$ and $\mathbf{W}^\prime\in \mathcal{V}(\mathbf{x}^\prime)$ such that 
\begin{equation*}
 \mathbf{\inf\{f_s(z)\colon \mathbf{z}\text{ in }\mathbf{V\sqcap X_s}\}}\leqslant \lambda \inf\{\mathbf{f_s}(\mathbf{z})\colon \mathbf{z}\text{ in }\mathbf{W}\sqcap \mathbf{X_s}\} + (\mathbf{1}-\lambda) \inf\{\mathbf{f_s}(\mathbf{z})\colon \mathbf{z}\text{ in }\mathbf{W}^\prime\sqcap \mathbf{X_s}\}. 
\end{equation*}
By taking the conditional supremum on both sides of the previous conditional inequality, one obtains 
\begin{equation*}
 \mathbf{f_c}(\lambda \mathbf{x} + (\mathbf{1}-\lambda)\mathbf{x}^\prime)\leqslant \lambda \mathbf{f_c}(\mathbf{x}) + (\mathbf{1}-\lambda)\mathbf{f_c}(\mathbf{x}^\prime).  
\end{equation*}
The last conditional inequality holds for $\lambda$ in $\mathbf{R}$ with $\mathbf{0\leqslant \lambda \leqslant 1}$ by approximating $\lambda$ with step functions in $\mathbf{R_s}$ and the conditional lower semi-continuity of $\mathbf{f_c}$. 

Second, we show that $\mathbf{f_c}$ is conditionally proper. 
Since $f$ is proper and $\mathbf{f_c}$ is a conditional extension there is $\mathbf{x}_0$ in $\mathbf{X_c}$ with $\mathbf{f_c}(\mathbf{x}_0)$ in $\mathbf{R}$. 
By way of contradiction, suppose there exist $\mathbf{x}$ in $\mathbf{X_c}$ and $a>0$ such that $f_c(x)|a=-\infty|a$. 
By conditional convexity, one has $f_c(\lambda x_0 + (1-\lambda)x)|a=-\infty|a$ for all $\mathbf{0\leqslant \lambda <1}$. 
Since $\lambda \mathbf{x}_0 + (\mathbf{1}-\lambda)\mathbf{x}$ conditionally converges to $\mathbf{x}_0$ as $\lambda$ conditionally converges to $\mathbf{1}$, $f_c(x_0)|a=-\infty|a$ due to the conditional lower semi-continuity of $\mathbf{f_c}$ which is the desired contradiction. 
\end{itemize}
\end{proof}
As an application of the previous results, we obtain the following Fenchel-Moreau type duality for vector-valued functions.
\begin{theorem}\label{t:FM}
Let $f\colon X\to \overline{R}$ be proper convex. 
Then $f$ is lower semi-continuous if and only if 
\begin{equation}\label{rep1}
 f(x)=\sup_{y\in Y_c}\{\langle x,y\rangle_c - f^\ast(y)\} 
\end{equation}
for all $x\in X$ and for the convex conjugate $f^\ast(y):=\sup_{x\in X} \{\langle x,y\rangle_c - f(x)\}$ for all $y\in Y_c$.
\end{theorem}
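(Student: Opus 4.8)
The plan is to deduce both implications from a conditional version of the Fenchel–Moreau theorem, applied to the conditionally lower semi-continuous extension of $f$ supplied by Theorem \ref{t:lscextension}, and then to read the resulting conditional identity on the level of the stable sets underlying $\mathbf{X_c}$, $\mathbf{Y_c}$ and $\overline{\mathbf{R}}$, using that $f$ agrees with its extension on the conditional elements of constant step functions.

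For the ``if'' direction I would argue that \eqref{rep1} exhibits $f$ as a supremum of lower semi-continuous functions. Fix $y\in Y_c$ and consider $g_y\colon X\to\overline{R}$, $g_y(x):=\langle x,y\rangle_c-f^\ast(y)$. If $f^\ast(y)\in R$, then the conditional extension $\mathbf{x}\mapsto\langle\mathbf{x},\mathbf{y}\rangle_c-f^\ast(y)$ is conditionally $\sigma(\mathbf{X_c},\mathbf{Y_c})$-continuous, since for $\mathbf{V}=\mathbf{V^{r,x}_{(y)}}$ one has $\inf\{(g_y)_s(z)\colon z\in V\cap X_s\}\geqslant g_y(x)-\mathbf{r}$; if $f^\ast(y)=+\infty$ then $g_y\equiv-\infty$; and $f^\ast(y)=-\infty$ cannot occur because $f$ is proper. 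Thus each $g_y$ is lower semi-continuous in the sense of Definition \ref{d:lsc}, and since $(g_y)_s\leqslant f_s$ on $X_s$ and $f=\sup_{y\in Y_c}g_y$, a short manipulation of conditional suprema and infima yields $f(x)=\sup_{V\in\mathcal{V}(x)}\inf\{f_s(z)\colon z\in V\cap X_s\}$. I expect no difficulty here.

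For the ``only if'' direction, assume $f$ lower semi-continuous. First I would invoke Theorem \ref{t:lscextension} to obtain a conditionally lower semi-continuous, conditionally proper convex extension $\mathbf{f_c}\colon\mathbf{X_c}\to\overline{\mathbf{R}}$ of $f$, and Theorem \ref{thm:impo01} to know that $(\mathbf{X_c},\mathbf{Y_c},\langle\cdot,\cdot\rangle_c)$ is a conditional dual pair. Applying a conditional Fenchel–Moreau theorem then gives, for all $\mathbf{x}$ in $\mathbf{X_c}$,
\begin{equation*}
 \mathbf{f_c(x)}=\sup_{\mathbf{y}\text{ in }\mathbf{Y_c}}\bigl\{\langle\mathbf{x},\mathbf{y}\rangle_c-\mathbf{f_c}^\ast(\mathbf{y})\bigr\},\qquad \mathbf{f_c}^\ast(\mathbf{y})=\sup_{\mathbf{x}\text{ in }\mathbf{X_c}}\bigl\{\langle\mathbf{x},\mathbf{y}\rangle_c-\mathbf{f_c(x)}\bigr\}.
\end{equation*}
Since $y\mapsto f^\ast(y)$ is stable it induces a conditional function $\mathbf{f}^\ast$ on $\mathbf{Y_c}$, and restricting the conditional supremum defining $\mathbf{f_c}^\ast$ to the conditional elements of constant step functions, on which $\mathbf{f_c}$ agrees with $f$ (Theorem \ref{t:lscextension}, Remark \ref{rem:stepfunc}), gives $\mathbf{f}^\ast(\mathbf{y})\leqslant\mathbf{f_c}^\ast(\mathbf{y})$ for all $\mathbf{y}$ in $\mathbf{Y_c}$. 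Hence, for $x\in X$ with associated conditional element $\mathbf{x}$,
\begin{equation*}
 f(x)=\mathbf{f_c(x)}\leqslant\sup_{\mathbf{y}\text{ in }\mathbf{Y_c}}\bigl\{\langle\mathbf{x},\mathbf{y}\rangle_c-\mathbf{f}^\ast(\mathbf{y})\bigr\},
\end{equation*}
while the Fenchel–Young inequality $\langle x,y\rangle_c-f^\ast(y)\leqslant f(x)$, immediate from the definition of $f^\ast$, yields $\sup_{y\in Y_c}\{\langle x,y\rangle_c-f^\ast(y)\}\leqslant f(x)$. Reading the conditional supremum on the level of the Dedekind complete stable set $\overline{R}$, where the conditional supremum of a stable family coincides with the classical supremum, the two inequalities combine to \eqref{rep1}.

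The hard part is the conditional Fenchel–Moreau theorem on which the argument hinges: one must know that the classical biconjugation and strong separation machinery transfers verbatim to a conditional dual pair, which ultimately rests on the conditional Hahn–Banach theorem of \cite{DJKK13} applied to the conditional epigraph of $\mathbf{f_c}$ inside the conditional locally convex space $(\mathbf{X_c},\sigma(\mathbf{X_c},\mathbf{Y_c}))$; if this is not already available, it has to be set up. The remaining, more bookkeeping-style, difficulty is the disciplined translation between conditional objects over $\mathbf{X_c},\mathbf{Y_c},\overline{\mathbf{R}}$ and classical objects over their underlying stable sets — that $\mathbf{f_c}$ restricts to $f$ on constants, that $f^\ast$ is stable with $\mathbf{f}^\ast\leqslant\mathbf{f_c}^\ast$, and that suprema of stable families are computed classically in $\overline{R}$ — each routine within the framework of \cite{DJKK13}, but each needing to be spelled out.
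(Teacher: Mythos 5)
Your ``only if'' direction coincides with the paper's proof: extend $f$ by Theorem \ref{t:lscextension}, invoke a conditional Fenchel--Moreau theorem for $\mathbf{f_c}$ (which the paper likewise only sketches, via $(\mathbf{X_c},\sigma(\mathbf{X_c},\mathbf{Y_c}))^\ast=\mathbf{Y_c}$ and conditional strong separation \cite[Theorem 5.5]{DJKK13}), and then sandwich using $f^\ast\leqslant f^\ast_c$ together with Fenchel--Young, exactly as in the paper's display \eqref{eq19}. Where you genuinely depart from the paper is the ``if'' direction: the paper argues by contradiction, extracting a stable net $(x^V)_{V\in\mathcal{V}(x)}$ converging to $x$ with $f_s(x^V)\leqslant f(x)-\delta$, applying \eqref{rep1} at each constant $x^V_i$, using stability of the pairing to deduce $f(x)-\delta\geqslant \langle x^V,y\rangle_c-f^\ast(y)$ for all $V$ and $y$, and passing to the limit; you instead read \eqref{rep1} as exhibiting $f$ as the pointwise supremum of the minorants $g_y(\cdot)=\langle\cdot,y\rangle_c-f^\ast(y)$, show each $g_y$ is lower semi-continuous in the sense of Definition \ref{d:lsc} via the one-element neighbourhood $\mathbf{V^{r,x}_{(y)}}$, and use $(g_y)_s\leqslant f_s$ on $X_s$ (Fenchel--Young plus stability of the pairing) to pass the estimate to $f$. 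Both arguments are correct and of similar length; yours is the more standard ``supremum of continuous affine minorants'' route and avoids the contradiction/net bookkeeping, while the paper's limit argument sidesteps any discussion of the individual $g_y$. Two points you should spell out, both also used tacitly by the paper: first, that $f^\ast$ is a stable function of $y$, i.e.\ $f^\ast\bigl(\sum y_j|b_j\bigr)=\sum f^\ast(y_j)|b_j$, which holds because the supremum over the constants $x\in X$ agrees with the supremum over the stable hull $X_s$; second, in your case analysis for $g_y$ the value $f^\ast(y)$ may equal $+\infty$ only on part of a partition, a mixed case not covered by your two alternatives but harmless, since your key estimate $\inf\{(g_y)_s(z)\colon z\in V\cap X_s\}\geqslant g_y(x)-\mathbf{r}$ holds piecewise by consistency (on the part where $f^\ast(y)$ is $+\infty$ both sides are $-\infty$).
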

\begin{proof}
Suppose $f$ is lower semi-continuous. 
By Theorem \ref{t:lscextension}, there exists a conditionally proper convex lower semi-continuous extension $\mathbf{f_c\colon X_c\to\overline{R}}$. 
By a conditional version of the fundamental theorem of duality,\footnote{In \cite[Section 5]{DJKK13}, a conditional version of basic results in functional analysis is established.} one has 
\begin{equation*}
(\mathbf{X_c},\sigma(\mathbf{X_c,Y_c}))^\ast=\mathbf{Y_c}. 
\end{equation*}
By applying a conditional version of strong separation \cite[Theorem 5.5]{DJKK13}, and following the arguments in the proof of the classical Fenchel-Moreau theorem, one obtains 
\begin{equation*}
 \mathbf{f_c(x)=\sup_{y\text{ in }Y_c}\{\langle x,y\rangle_c - f^\ast_c(y)\}}
\end{equation*}
for all $\mathbf{x}$ in $\mathbf{X_c}$ and for the conditional convex conjugate $\mathbf{f^\ast_c(y):=\sup_{x\text{ in }X_c} \{\langle x,y\rangle_c - f_c(x)\}}$ for all $\mathbf{y}$ in $\mathbf{Y_c}$. 
Fix $x\in X$. 
Since 
\begin{equation}\label{eq19}
 f^\ast(y)=\sup_{x\in X} \{\langle x,y\rangle_c - f(x)\}\leqslant \sup_{x\in X_c} \{\langle x,y\rangle_c - f_c(x)\}=f^\ast_c(y)
\end{equation}
and $\langle x,y\rangle_c - f^\ast(y)\leqslant f(x)$ for all $y\in Y_c$, one has
\begin{align*}
 f(x)=f_c(x)&=\sup_{y\in Y_c}\{\langle x,y\rangle_c - f^\ast_c(y)\}\\
 &\leqslant \sup_{y\in Y_c}\{\langle x,y\rangle_c - f^\ast(y)\} \leqslant f(x). 
\end{align*}
Conversely, suppose \eqref{rep1} holds. 
Fix $x\in X$. 
We show that 
\begin{equation*}
 f(x)=\sup_{V\in\mathcal{V}(x)}\inf\{f_s(z)\colon z\in V\cap X_s\}. 
\end{equation*}
One has $f(x)\geqslant\sup_{V\in\mathcal{V}(x)}\inf\{f_s(z)\colon z\in V\cap X_s\}$ since $x\in V\cap X_s$ for all $V\in \mathcal{V}(x)$. 
By contradiction, suppose there exist $a>0$ and $\delta\in R_{++}$ such that $f(x)|a - \delta|a>\sup_{V\in\mathcal{V}(x)}\inf\{f_s(z)\colon z\in V\cap X_s\}|a$. 
By consistency and stability, we can assume that $a=1$. 
By stability, there exists a stable net $(x^V=\sum x^V_i|a^V_i)_{V\in\mathcal{V}(x)}$ converging to $x$ such that 
\begin{align*}
f(x)- \delta \geqslant f_s(x^V)&=\sum f(x^V_i)|a^V_i \\
		       &=\sum \sup_{y\in Y_c} \{\langle x^V_i,y\rangle_c - f^\ast(y)\}|a_i\\
		       &\geqslant \sup_{y\in Y_c}\{\langle x^V,y\rangle_c - f^\ast(y)\}, 
\end{align*}
where the last inequality follows from
\begin{align*}
 \langle x^V,y\rangle_c - f^\ast(y)&=\sum \langle x^V_i,y\rangle_c|a_i - f^\ast(y)\\
 &=\sum (\langle x^V_i,y\rangle_c-f^\ast(y))|a_i \\
 &\leqslant \sum \sup_{y\in Y_c}\{\langle x^V_i,y\rangle_c - f^\ast(y)\}|a_i 
\end{align*}
for all $y\in Y_c$. 
One concludes
\begin{equation*}
f(x)- \delta \geqslant \langle x^V,y\rangle_c - f^\ast(y) 
\end{equation*}
for all $V\in\mathcal{V}(x)$ and $y\in Y_c$. 
By passing to the limit and taking the supremum, one has
\begin{equation*}
 f(x)- \delta \geqslant \sup_{y \in Y_c}\{\langle x,y\rangle_c - f^\ast(y)\}=f(x) 
\end{equation*} 
which is contradictory. 
\end{proof}
\begin{remark}\label{r:maximality}
 The conditional function $\mathbf{f_c}$ is a conditionally proper convex lower semi-continuous extension of $f$ in the \emph{maximal sense}, that is, for every conditionally proper convex lower semi-continuous extension $\mathbf{g}$ of $f$ one has $\mathbf{g}\leqslant \mathbf{f_c}$. 
 In fact, one has $f^\ast_c\leqslant g^\ast$ similarly to the argument in \eqref{eq19}, and thus
 \begin{align*}
   \mathbf{f_{c}(x)}&=\mathbf{\sup_{y\text{ in }Y_c}\{\langle x,y\rangle_c - f^\ast_c(y)\}} \\
 		    &\geqslant \mathbf{\sup_{y\text{ in }Y_c}\{\langle x,y\rangle_c - g^\ast(y)\}} =\mathbf{g(x)}
 \end{align*}
where the last equality follows by a conditional version of the Fenchel-Moreau theorem as argued in the previous proof. 
\end{remark}
\begin{remark}\label{r:norm}
Let $X$ be a normed vector space and $f\colon X\to \overline{R}$ a proper convex function. 
With similar arguments as in the proofs of Theorem \ref{t:lscextension} and Theorem \ref{t:FM}, one can show that $f$ is lower semi-continuous in the sense that 
\begin{equation}\label{eq:normlsc}
 f(x)=\sup_{n\geq 1}\inf \{f(z)\colon z\in C^X_{1/n}(x)\}
\end{equation}
for all $x\in X$, where $C^X_r(x)$ is the closed ball of radius $r$ around $x$, if and only if $f$ admits the dual representation 
\begin{equation*}
 f(x)=\sup_{x^\ast\in (X^\ast)_c}\{\langle x,x^\ast\rangle_c - f^\ast(x^\ast)\}
\end{equation*}
where $f^\ast(x^\ast)=\sup_{x\in X} \{\langle x,x^\ast\rangle_c - f(x)\}$. 
\end{remark}
\begin{example}
 Let $\mathcal{A}$ be the power set algebra of a finite set of cardinality $d$ and $f\colon X\to \overline{\mathbb{R}}^d$ proper convex and lower semi-continuous. 
 By Theorem \ref{t:FM}, one obtains the following dual representation
 \begin{equation*}
  f(x)=\sup_{y\in Y^d}\{(\langle x,y_1\rangle, \ldots, \langle x, y_d\rangle) - f^\ast(y)\}
 \end{equation*}
where $f^\ast(y)=\sup_{x\in X}\{(\langle x,y_1\rangle, \ldots, \langle x, y_d\rangle) - f(x)\}$ for all $y\in Y^d$. 
\end{example}

\section{Application to vector duality in Bochner spaces}\label{sec:Bochner}

Throughout, let $(X,Y,\langle \cdot,\cdot \rangle)$ be a dual pair of Banach spaces such that $|\langle x,y  \rangle|\leq\|x\|\|y\|$ for all $x\in X$ and $y \in Y$. We assume that $C^X:=\{x\in X\colon \|x\|\leq 1\}$ is $\sigma( X,Y )$-closed and $C^Y:=\{y\in Y\colon \|y\|\leq 1\}$ is $\sigma(Y,X)$-closed. 

Let $(\Omega,\mathcal{F},\mu)$ be a complete $\sigma$-finite measure space. 
Identify in $\mathcal{F}$ two elements the symmetric difference of which is a $\mu$-null set, and thus obtain the associated measure algebra $\mathcal{A}$. 
The associated measure algebra is a complete Boolean algebra which satisfies the countable chain condition.\footnote{For more details about the associated measure algebra, see \cite[Chapter 22, Section 2]{monk1989handbook}.}
We denote the equivalence classes in $\mathcal{A}$ by $a=[A]$ where $A\in\mathcal{F}$. 
We write $1_A$ for the characteristic function of $A\in\mathcal{F}$. 
Let $\overline{L^0}$ be the collection of all measurable functions $x\colon \Omega\to \overline{\mathbb{R}}$ where two of them are identified if they agree almost everywhere. 
As usual, denote by $L^0$ the subset of $\overline{L^0}$ consisting of real-valued measurable functions. 
Henceforth, equalities and inequalities between measurable functions are understood in the almost everywhere sense. 

The set $\overline{L^0}$ induces a conditional set $\mathbf{\overline{L^0}}$ of objects 
\begin{equation}\label{eq:Bochner}
x\mid a=\{y\in \overline{L^0}\colon x 1_A = y 1_A \text{ for some }A\in a\} 
\end{equation}
where $x\in \overline{L^0}$ and $a\in\mathcal{A}$. 
We denote by $\mathbf{L^0}$ the conditional subset of $\mathbf{\overline{L^0}}$ induced by the stable subset $L^0$. 
By the countable chain condition, the elements in $p$ have at most countably many non-zero entries.
Let $(a_n)$ be a sequence in $p$ and $(x_n)$ a sequence in $\overline{L^0}$. 
The concatenation of $(x_n)$ along $(a_n)$ is the equivalence class in $\overline{L^0}$ of the measurable function $\sum x_n 1_{A_n}$ where $a_n=[A_n]$ for all $n$. 
In \cite[Theorem 4.4]{DJKK13}, it is shown that the conditional set $\mathbf{L^0}$ is conditionally isometrically isomorphic to the conditional real numbers $\mathbf{R}$ whenever the latter is constructed with respect to the associated measure algebra.
The stable order on $\overline{R}$ corresponds to the order of almost everywhere dominance on $\overline{L^0}$. 
The infimum and the supremum in $\overline{R}$ correspond to the essential infimum and the essential supremum in $\overline{L^0}$, respectively.  

Recall that a function $x\colon \Omega\to X$ is strongly measurable if there exists a sequence of simple functions $(x_n)$ such that $\| x_n - x \|\to 0$ almost everywhere. 
We denote by $L^0(X)$ the Bochner space of equivalence classes of all strongly measurable functions with respect to the equivalence relation of almost everywhere equality.\footnote{For more details about Bochner spaces, see \cite[Chapter II, Section 2]{joseph1977vector}.}
The Bochner space $L^0(X)$ induces a conditional set $\mathbf{L^0(X)}$ similarly to \eqref{eq:Bochner}. 
The following theorem generalizes \cite[Theorem 4.4]{DJKK13} and \cite[Chapter 7, Theorem 7.1]{bell2005set} in the Boolean-valued context, respectively.
\begin{theorem}\label{p:Bochner}
The conditional set $\mathbf{L^0(X)}$ is a conditional Banach space conditionally isometrically isomorphic to $(\mathbf{X_c},\|\cdot \|_\mathbf{c})$.
\end{theorem}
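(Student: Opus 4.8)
The plan is to exhibit $\mathbf{L^0(X)}$ as the conditional completion of the conditional metric space $(\mathbf{X_s},\mathbf{d_s})$ of step functions and then invoke the uniqueness part of Theorem \ref{thm:010101}. First I would record that the pointwise norm $y\mapsto(\omega\mapsto\|y(\omega)\|)$ turns $\mathbf{L^0(X)}$ into a conditional vector space carrying a conditional norm $\|\cdot\|\colon\mathbf{L^0(X)}\to\mathbf{R}_+$: the axioms of a conditional norm hold pointwise, and stability (C3) holds because the concatenation of an (at most countable, by the countable chain condition) family of strongly measurable functions along a partition in $\mathcal{A}$ is again strongly measurable. Under the identification $\mathbf{L^0}\cong\mathbf{R}$ of \cite[Theorem 4.4]{DJKK13}, the conditional metric of $\mathbf{L^0(X)}$ at a pair $(y,y')$ is the class of $\omega\mapsto\|y(\omega)-y'(\omega)\|$, and conditional convergence, respectively the conditional Cauchy property, of a conditional sequence in $\mathbf{L^0(X)}$ corresponds to almost everywhere convergence, respectively almost everywhere Cauchyness, of the pointwise norms; the countable chain condition together with $\sigma$-finiteness is exactly what makes this scalar translation work.

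Next I would build the conditionally isometric embedding of $\mathbf{X_s}$. A step function $\sum x_i|a_i\in X_s$ with $(a_i)\in p$ and $a_i=[A_i]$ is sent to the class of $\sum x_i\,1_{A_i}\in L^0(X)$; by the countable chain condition the image of this assignment is precisely the set of classes of countably-valued strongly measurable functions. One checks that it is well defined, conditionally injective, conditionally linear, and conditionally isometric, since the pointwise norm of $\sum x_i\,1_{A_i}$ is $\sum\|x_i\|\,1_{A_i}$, which under $\mathbf{L^0}\cong\mathbf{R}$ is the element $\sum\|x_i\|\mid a_i=\|\sum x_i|a_i\|$ of $R_+$; hence the ambient conditional metric of $\mathbf{L^0(X)}$ restricts to $\mathbf{d_s}$ on the copy of $\mathbf{X_s}$.

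It then remains to verify that this copy of $\mathbf{X_s}$ is conditionally dense in $\mathbf{L^0(X)}$ and that $\mathbf{L^0(X)}$ is conditionally complete; by Theorem \ref{thm:010101} this identifies $\mathbf{L^0(X)}$ with $\mathbf{X_c}$, and the isomorphism is automatically conditionally linear and conditionally isometric because both structures are determined on a conditionally dense subset, so $(\mathbf{L^0(X)},\|\cdot\|)$ is the conditional Banach space $(\mathbf{X_c},\|\cdot\|_\mathbf{c})$. For density, fix $y\in L^0(X)$: strong measurability provides simple functions $x_n$ with $\|x_n-y\|\to0$ almost everywhere, which by the translation above says exactly that the induced conditional sequence in $\mathbf{X_s}$ conditionally converges to $y$, and a concatenation argument over $p$ promotes this to conditional density. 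For completeness, a conditional Cauchy sequence $(\mathbf{y_n})$ in $\mathbf{L^0(X)}$, tested against the classical positive radii $1/k$ and with the resulting conditional indices in $\mathbf{N_s}$ unravelled over the countable partitions they carry, is almost everywhere Cauchy; completeness of $X$ yields an almost everywhere pointwise limit, which is strongly measurable and hence an element of $L^0(X)$, and almost everywhere convergence is conditional convergence, so $(\mathbf{y_n})$ has a conditional limit. I expect the genuine work to sit in this last paragraph: carefully matching the measure-theoretic notions ``almost everywhere convergent/Cauchy'' with the conditional-topological notions of conditional convergence and conditional Cauchy sequences, which are indexed by $\mathbf{N_s}$ rather than $\mathbb{N}$, using the countable chain condition to keep every partition that appears countable. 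This is the only point where the proof really uses that $\mathcal{A}$ is a measure algebra, and it runs parallel to the arguments behind \cite[Theorem 4.4]{DJKK13} and \cite[Chapter 7, Theorem 7.1]{bell2005set}.
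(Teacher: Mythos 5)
Your proof is correct, and at its core it does the same work as the paper: the conditional completeness of $\mathbf{L^0(X)}$ is established by exactly the paper's argument (test against radii $1/k$, choose indices in $\mathbb{N}_s$, unravel the countable partitions, use completeness of $X$ and of the measure space), and the isometric matching of step functions with countably valued strongly measurable functions is the same identification. Where you diverge is in how the final identification with $\mathbf{X_c}$ is packaged: you embed $\mathbf{X_s}$ into $\mathbf{L^0(X)}$, prove conditional density of its image using only the definition of strong measurability (a.e. approximation by simple functions, upgraded to conditional convergence by a measurable choice of index), and then invoke the uniqueness part of Theorem \ref{thm:010101}; the paper instead constructs the map $\mathbf{j}\colon\mathbf{L^0(X)}\to\mathbf{X_c}$ directly, sending $x$ to the class of a conditional Cauchy sequence of countably valued approximants obtained from the Pettis measurability theorem (a.e.\ uniform approximation), and checks by inspection that it is a conditional isometric isomorphism. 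Your route buys a slightly cleaner logical structure (no appeal to Pettis, and conditional linearity/isometry of the identification comes from density plus the universal property), at the cost of being less explicit: the paper's direct $\mathbf{j}$ is the concrete identification that is then used when extending the duality pairing to $L^0(X)\times L^0(Y)$, so if you take your route you should still record the explicit form of the isomorphism on countably valued functions. The one step you should spell out a bit more is the asserted equivalence of a.e.\ convergence with conditional convergence for conditional sequences indexed by $\mathbf{N_s}$ (your ``translation''), since that is precisely where the countable chain condition enters; you correctly flag it, and it is the same point the paper leans on.
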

\begin{proof}
By \cite[Theorem 4.4]{DJKK13}, the conditional real line $\mathbf{R}$ is conditionally isometrically isomorphic to $\mathbf{L^0}$. 
On $\mathbf{L^0(X)}$ the conditional addition and conditional scalar multiplication are induced by the stable functions 
$(x+y)(\omega)=x(\omega)+y(\omega)$ and $(\alpha x)(\omega)=\alpha(\omega)x(\omega)$ almost everywhere, respectively. 
Let $\mathbf{\|\cdot \|_0\colon L^0(X)\to L^0}$ be the conditional function induced by the stable function $\|\cdot \|_0\colon L^0(X)\to L^0$ defined by $\|x\|_0(\omega)=\|x(\omega)\|$ for almost all $\omega\in\Omega$. 
Direct verification shows that $(\mathbf{L^0(X),\|\cdot\|_0})$ is a conditional normed vector space. 
We prove that $\mathbf{L^0(X)}$ is conditionally complete. 
To this end, let $(\mathbf{x_n})$ be a conditional Cauchy sequence in $\mathbf{L^0(X)}$. 
By induction, choose for every $k\in\mathbb{N}$ an $n_k\in\mathbb{N}_s$ such that $n_k\geqslant n_{k-1}$ and $\|x_n-x_m\|_0\leqslant 1/k$ for all $n,m\geqslant n_k$. 
Then $(x_{n_k(\omega)}(\omega))$ is a Cauchy sequence in $X$ almost everywhere. 
By the completeness of $(\Omega,\mathcal{F},\mu)$ and the completeness of $X$, the almost everywhere limit $x(\omega):=\lim_{k\to \infty} x_{n_k(\omega)}(\omega)$ exists in $L^0(X)$.  
By the conditional triangle inequality, $(\mathbf{x_n})$ conditionally converges to $\mathbf{x}$. 
Let $\mathbf{j\colon L^0(X)\to X_c}$ be the conditional function induced by the stable map $j(x):=[(x_n)]$.\footnote{By Pettis measurability theorem in the form of \cite[Chapter II, Section 1, Corollary 3]{joseph1977vector}, every strongly measurable function is the almost sure uniform limit of a sequence $(x_n)$ of countably valued strongly measurable functions. 
Put $x_n=\sum x_{n_m}|a_m$ for every $n\in\mathbb{N}_s$ with $n=\sum n_m|a_m$. Then 
$(\mathbf{x_n})$ is a conditional Cauchy sequence in $\mathbf{X_s}$.} 
By inspection, $\mathbf{j}$ is a conditional isometric isomorphy.
\end{proof}
Note that the duality pairing $\langle \cdot, \cdot \rangle$ on $X\times Y$ extends to $L^0(X)\times L^0(Y)$ by defining 
\begin{equation*}
 \langle x, y\rangle = \lim_{n\to \infty} \langle x_n, y_n \rangle
\end{equation*}
where $x\in L^0(X)$ is the limit of a sequence of simples functions $(x_n)$ and $y\in L^0(Y)$ is the limit of a sequence of simples functions $(y_n)$. 
It can be checked that 
\begin{equation*}
\langle x , y \rangle =\langle x, y\rangle _c
\end{equation*}
for all $x\in L^0(X)$ and $y\in L^0(Y)$ where we identified $X_c$ with $L^0(X)$, $Y_c$ with $L^0(Y)$ and $R$ with $L^0$ by Theorem \ref{p:Bochner} and \cite[Theorem 4.4]{DJKK13}. 
 
The stable set $X_s$ can be identified with 
\begin{equation*}
 \left\{\sum x_n 1_{A_n} \colon (x_n) \text{ in } X, (A_n) \text{ is partition of $\Omega$ in } \mathcal{F}\right\} \subseteq L^0(X),  
\end{equation*}
and $R_{++}$ can be identified with  $L^0_{++}:=\{r\in L^0\colon r>0\}$.  
Recall that a function $f\colon X\to\overline{L^0}$ is lower semi-continuous if $\langle x_\alpha, y\rangle_c$ converges\footnote{That is, for all $r\in L^0_{++}$ there exists $\alpha_0$ such that $|\langle x_\alpha - x,y\rangle_c|\leqslant r$ for all $\alpha\geqslant \alpha_0$.} to $\langle x,y\rangle_c$ implies $f(x)\leqslant \essliminf f_s(x_\alpha)$ for all stable nets\footnote{That is, $\sum x_{\alpha_n}1_{A_n}=x_{\sum \alpha_n 1_{A_n}}$ for all partitions $(A_n)$ of $\Omega$ in $\mathcal{F}$ and countable subfamilies $(x_{\alpha_n})$ of $(x_\alpha)$.} in $X_s$ and $y\in Y_c$. 
The function $f$ is proper convex if $-\infty <f(x)$ for all $x\in X$ and $f(x_0)\in L^0$ for at least one $x_0\in X$ and $f(\lambda x + (1-\lambda)y)\leqslant \lambda f(x)+(1-\lambda)f(y)$ for all $\lambda\in\mathbb{R}$ with $0\leq \lambda \leq 1$ and every $x,y\in X$. 
A reformulation of Theorem \ref{t:FM} in the present context is the following main result in vector duality. 
\begin{theorem}\label{t:vectorduality}
Let $f\colon X\to \overline{L^0}$ be proper convex. Then $f$ is lower semi-continuous if and only if  
\begin{equation}\label{eq:fm}
 f(x)=\esssup_{y\in L^0(Y)}\{\langle x,y\rangle - f^\ast(y)\}
\end{equation}
$\text{ for all }x\in X$ where $f^\ast(y)=\esssup_{x\in X}\{\langle x,y\rangle - f(x)\}$ for all $y\in L^0(Y)$. 
\end{theorem}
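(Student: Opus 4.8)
The plan is to reduce Theorem \ref{t:vectorduality} directly to Theorem \ref{t:FM} via the identifications established in this section. First I would invoke Theorem \ref{p:Bochner} and \cite[Theorem 4.4]{DJKK13} to identify $\mathbf{X_c}$ with $\mathbf{L^0(X)}$, $\mathbf{Y_c}$ with $\mathbf{L^0(Y)}$ and $\overline{\mathbf{R}}$ with $\mathbf{\overline{L^0}}$ as conditional sets, together with the identifications of $R$ with $L^0$, of $R_{++}$ with $L^0_{++}$, of the conditional order with almost everywhere dominance, and of the conditional supremum and infimum with the essential supremum and essential infimum. Under these identifications the conditional pairing $\langle \cdot,\cdot\rangle_\mathbf{c}$ on $\mathbf{X_c}\times\mathbf{Y_c}$ becomes the pairing $\langle\cdot,\cdot\rangle$ on $L^0(X)\times L^0(Y)$ as noted after the proof of Theorem \ref{p:Bochner}, and the conditional weak topology $\sigma(\mathbf{X_c},\mathbf{Y_c})$ becomes exactly the topology described by the stable nets in the reformulated definition of lower semi-continuity.

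The second step is to check that the two notions in the statement of Theorem \ref{t:vectorduality} coincide with the corresponding notions of Theorem \ref{t:FM}. Here one verifies that a function $f\colon X\to\overline{L^0}$ is proper convex in the sense stated just before Theorem \ref{t:vectorduality} if and only if $f\colon X\to\overline{R}$ is proper convex in the sense of Definition \ref{d:lsc}; this is immediate since the defining inequalities are the same after identifying $\overline{L^0}$ with $\overline{R}$. Likewise, the reformulated lower semi-continuity condition (that $\langle x_\alpha,y\rangle_c\to\langle x,y\rangle_c$ for all $y\in Y_c$ implies $f(x)\leqslant\essliminf f_s(x_\alpha)$ for all stable nets in $X_s$) is, by Remark \ref{r:lsc}, precisely the statement that $\mathbf{f_s}$ is conditionally lower semi-continuous on $\mathbf{X_s}$ with the relative $\sigma(\mathbf{X_c},\mathbf{Y_c})$-topology, which is the lower semi-continuity property \eqref{eq:lsc} of Definition \ref{d:lsc}. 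The point is that the classical stable nets in $X_s$ are exactly the stable nets that correspond to conditional nets in $\mathbf{X_s}$, via \cite[Proposition 3.5]{DJKK13} and the translation paragraph in Remark \ref{r:lsc}.

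With these identifications in place, Theorem \ref{t:FM} applies verbatim: $f$ is lower semi-continuous if and only if $f(x)=\sup_{y\in Y_c}\{\langle x,y\rangle_c - f^\ast(y)\}$ for all $x\in X$ with $f^\ast(y)=\sup_{x\in X}\{\langle x,y\rangle_c - f(x)\}$. Rewriting $Y_c$ as $L^0(Y)$, $\langle\cdot,\cdot\rangle_c$ as $\langle\cdot,\cdot\rangle$, and $\sup$ as $\esssup$ yields exactly \eqref{eq:fm} and the stated formula for $f^\ast$. I would also remark that the conjugate is unchanged by enlarging the domain from $X$ to $X_s$, as in the computation \eqref{eq19} and in the final contradiction argument of the proof of Theorem \ref{t:FM}; this ensures the $f^\ast$ appearing here coincides with the one in Theorem \ref{t:FM}.

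The main obstacle is not any deep new argument but rather the bookkeeping: one must be careful that every object in the statement — the pairing, the order, the $\esssup$/$\essinf$, the step functions $X_s$, the notion of stable net — is translated consistently under the chain of identifications, and in particular that the measure-theoretic reformulation of lower semi-continuity via stable nets genuinely matches the conditional-topological condition of Definition \ref{d:lsc}. The countable chain condition of the associated measure algebra is what makes the partitions countable and hence the concatenations $\sum x_n 1_{A_n}$ well-defined measurable functions, so I would make sure that point is explicitly used when identifying $X_s$ with its concrete description inside $L^0(X)$. Once the dictionary is set up correctly, the proof is a one-line appeal to Theorem \ref{t:FM}.
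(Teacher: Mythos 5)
Your proposal is correct and follows exactly the route of the paper, whose proof of Theorem \ref{t:vectorduality} is the one-line deduction from Theorem \ref{t:FM} together with the identification of $\mathbf{X_c}$, $\mathbf{Y_c}$, $\overline{\mathbf{R}}$ with $\mathbf{L^0(X)}$, $\mathbf{L^0(Y)}$, $\mathbf{\overline{L^0}}$ provided by Theorem \ref{p:Bochner} and \cite[Theorem 4.4]{DJKK13}. Your additional bookkeeping (translation of the pairing, the order, the stable nets, and the role of the countable chain condition) simply makes explicit what the paper leaves implicit.
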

\begin{proof}
 The result is a consequence of Theorem \ref{t:FM} and Theorem \ref{p:Bochner}.
\end{proof}
\begin{remark}
 Let $(\Omega,\mathcal{F},\mu)$ be a complete atomless finite measure space, $L^\infty=L^\infty(\Omega,\mathcal{F},\mu)$ and $L^1=L^1(\Omega,\mathcal{F},\mu)$. 
 Consider the identity map
 \begin{equation*}
 I\colon L^\infty\to L^0.
 \end{equation*}
Although the lower level set $\{x\in L^\infty\colon I(x)=x\leqslant r\}$ is $\sigma(L^\infty,L^1)$-closed for all $r\in L^0$ due to the Krein-\v{S}mulian theorem, the identity is not lower semi-continuous. In fact, for all $x\in L^\infty$ and $V\in \mathcal{V}(x)$ one has 
\begin{equation*}
 \essinf\{I_s(z)\colon z\in V\cap L^\infty_s\}=-\infty. 
\end{equation*}
In particular, the identity does not admit a Fenchel-Moreau type representation of the form \eqref{eq:fm}. 
\end{remark}

\end{document}